\documentclass[12pt]{article}
\usepackage[utf8]{inputenc}
\usepackage[english]{babel}
\usepackage{amsmath}
\usepackage{amsthm}
\usepackage[usenames]{color}
\usepackage{amsfonts}
\usepackage{amssymb}
\usepackage{makeidx}
\usepackage{graphicx}
\usepackage[left=2.6cm,right=2.6cm,top=3.5cm,bottom=2cm]{geometry}
\usepackage{enumerate}
\usepackage{mathrsfs}
\usepackage{multicol}

\usepackage{todonotes}

\usepackage{hyperref}

\newtheorem{theorem}{Theorem}[section]

\newtheorem{prop}[theorem]{Proposition}
\newtheorem{lemma}[theorem]{Lemma}
\newtheorem{corollary}[theorem]{Corollary}
\newtheorem{qs}[theorem]{Question}

\theoremstyle{definition}
\newtheorem{definition}{Definition}[section]
\newtheorem{remark}[theorem]{Remark}

\newcommand{\Z}{\mathbb{Z}}
\newcommand{\R}{\mathbb{R}}

\newcommand{\dist}{\mathrm{dist}}

\newcommand{\tF}{\widetilde{F}}

\newcommand{\notto}{\,\,\, {\not\!\!\longrightarrow}\,}

\title{An exotic Denjoy example of class~$C^1$}

\author{\textsc{Maximiliano Escayola}\thanks{School of Mathematics, Korea Institute for Advanced Study (KIAS), Seoul, 02455, Korea.
\\ 
The author was supported by KIAS individual grant no.~MG107601 of Korea Institute for Advanced Study.
\\ e-mail: maxiescayola@kias.re.kr}}
\date{}

\begin{document}

\maketitle

\begin{abstract}
We construct a $C^1$-diffeomorphism of the circle having an invariant Cantor minimal set such that, when the lengths of the connected components of its complement are arranged in decreasing order, the ratios of consecutive lengths do not converge~to~1. This gives a negative answer to a longstanding question posed by Dusa McDuff.
\end{abstract}

\vspace{0.2cm}

\noindent{\bf Keywords:}  Exceptional circle diffeomorphism, Denjoy counterexample, McDuff's question.

\vspace{0.2cm}

\noindent{\bf 2020 Mathematics Subject Classification:}  37E10 (Primary); 37C05 (Secondary)

\section{Introduction}
\subsection{McDuff's question and the main result}
Given an orientation-preserving homeomorphism $f$ of the circle $\mathbb S^1=\R/\Z$, its Poincaré rotation number 
\[
\lim_{n\to\infty}\rho(f)=\frac{F^n(x)-x}{n} \mod\Z,
\] where $x\in \R$ is arbitrary and $F$ is any lift of $f$, provides a fundamental description of its dynamics. If $\rho(f)$ is rational, then $f$ has periodic points. If $\rho(f)$ is irrational, then $f$ is semiconjugate to the corresponding irrational rotation, and there are two possibilities: either the action of $f$ is minimal, meaning that every orbit is dense, in which case the semiconjugacy is actually a conjugacy, or $f$ admits a unique minimal invariant Cantor set on which its action is minimal. Such an $f$ is usually called an exceptional homeomorphism, or an exceptional diffeomorphism when $f$ is differentiable. 

The classical Denjoy theorem \cite{denjoy} states that the latter possibility cannot occur in the~$C^2$ setting. In other words, every $C^2$-diffeomorphism of the circle with irrational rotation number is conjugate to the corresponding irrational rotation. Examples of exceptional $C^1$-diffeomorphisms had already appeared in the work of Bohl \cite{bohl}. Later, Herman~\cite{herman} showed that exceptional $C^{1+\alpha}$-diffeomorphisms exist for every $\alpha\in (0,1)$. Here, by a $C^{1+\alpha}$-diffeomorphism, we mean a $C^1$-diffeomorphism whose derivative is $\alpha$-Hölder continuous. More recently, Kim and Koberda \cite{kim-koberda} substantially generalized these constructions by establishing an integrability condition on the modulus of continuity of the derivative that guarantees the existence of exceptional circle diffeomorphisms.

For every positive summable sequence $(\ell_n)_{n\in\Z}$ satisfying $\ell_n/\ell_{n+1}\to 1$, $n\to \pm \infty$, Dusa McDuff~\cite{McDuff} constructed an exceptional $C^1$-diffeomorphism $f$ of the circle such that the connected components of the complement of its invariant Cantor set are the intervals $I_n=f^n(I_0)$, with $|I_n|=\ell_n$. In her examples, if the lengths $\ell_n$ are rearranged in decreasing order, say  $\lambda_{1}\ge \lambda_{2}\ge\dots,$ then $\lambda_n/\lambda_{n+1}\to 1.$ More generally, McDuff investigated necessary conditions for a Cantor subset of the circle to arise as the minimal set of a $C^1$-diffeomorphism (the interested reader may consult the works of Portela \cite{portela-new,portela-regular} for further developments in this direction). She proved that the sequence $(\lambda_n/\lambda_{n+1})_{n\geq 1}$ is bounded and has $1$ as a nontrivial limit point. In particular, 
\[
\liminf_{n\to\infty}\frac{\lambda_n}{\lambda_{n+1}}=1.
\]
This led her to ask whether the full sequence must always converge to $1$:
\begin{qs}[D. McDuff,~{\cite{McDuff}}] Let $f$ be an exceptional $C^1$ circle diffeomorphism, and let $\lambda_1\geq\lambda_2\geq\cdots$ be the lengths of the connected components of the complement of its minimal invariant Cantor set, arranged in decreasing order. 
\[ 
\text{Is it always true that}\quad\lim_{n\to\infty}\frac{\lambda_n}{\lambda_{n+1}}=1\quad? 
\] 
\end{qs} 
\noindent Results pointing in the affirmative direction of the answer to the McDuff's question were obtained by Iglesias and Portela \cite{iglesias-portela-nonlinearity,iglesias-portela} for certain families of Cantor sets. 
We refer the reader to the expository paper of Athanassopoulos \cite{A}, which is devoted to this question, for further background.

The main result of this work is an example, providing  a \emph{negative} (and thus unexpected) answer to McDuff's question. Namely, we establish the following theorem.

\begin{theorem}[Main result]\label{thm: counterexample McDuff}
There exists an orientation-preserving $C^1$ diffeomorphism
\[
f:\mathbb S^1\to\mathbb S^1
\]
possessing a minimal invariant Cantor set $K$ with the following property: if
$\lambda_1\geq\lambda_2\geq\cdots$
are the lengths of the connected components of $\mathbb S^1\setminus K$,
arranged in decreasing order, then
\[
\limsup_{i\to\infty}\frac{\lambda_i}{\lambda_{i+1}}>1;
\quad \text{in particular},\quad\frac{\lambda_i}{\lambda_{i+1}}
\notto 1.
\]
\end{theorem}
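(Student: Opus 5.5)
The plan is a Denjoy-type construction in which the lengths $\ell_n=|I_n|$ of the wandering intervals $I_n=f^n(I_0)$ are prescribed so that the condition needed for $C^1$ smoothness, namely $\ell_{n+1}/\ell_n\to 1$ as $n\to\pm\infty$, holds, while the \emph{decreasingly rearranged} sequence $(\lambda_i)$ has gaps. McDuff's construction gives a $C^1$ exceptional diffeomorphism for any summable $(\ell_n)_{n\in\Z}$ with $\ell_n/\ell_{n+1}\to1$. The reason her examples satisfy $\lambda_i/\lambda_{i+1}\to1$ is that the forward and backward tails each decay slowly and interleave densely in value. So I do not expect to modify her gluing argument. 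Instead I will exploit the two-sidedness of $\mathbb Z$ and choose the lengths so that the \emph{set of values} $\{\ell_n\}$ clusters at a discrete set of scales, with ratio gaps bounded below between clusters.

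Concretely, I will fix $q>1$ and build $\ell_n$ in blocks. Along a sparse increasing sequence of scales $s_k=q^{-k}$, the sequence $\ell_n$ spends long stretches of indices essentially constant near $s_k$. It then transitions to $s_{k+1}$ very slowly, over $N_k$ steps with ratio $q^{1/N_k}\to1$.

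The obstacle is immediately visible. A slow monotone transition fills the interval $[s_{k+1},s_k]$ with values spaced by ratios close to $1$, so the rearrangement again has consecutive ratios near $1$. A single monotone two-sided sequence with $\ell_n/\ell_{n+1}\to1$ always produces values dense in ratio. Hence the gap must come from \emph{multiplicity}: the decreasing rearrangement only sees the multiset of values. The condition $\lambda_i/\lambda_{i+1}\to1$ fails if, infinitely often, there is some value $v$ with $\lambda_i=v$ and $\lambda_{i+1}\le v/c$ for a fixed $c>1$. That requires no length strictly between $v/c$ and $v$. The transitions are unavoidable, so I will let each transition pass only through values lying in a sparse set of ``allowed'' levels, approximating continuity in ratio. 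This is impossible for an exactly monotone path, but a ratio sequence tending to $1$ forces only local slowness.

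I therefore expect the genuine fix to be to abandon the requirement $\ell_{n+1}/\ell_n\to1$ along the orbit and use a more flexible $C^1$ criterion. A diffeomorphism $I_n\to I_{n+1}$ can have derivative near $1$ at the endpoints even if $\ell_{n+1}/\ell_n$ is not near $1$, provided the interior derivative is allowed to vary while the global map stays $C^1$. This needs $f'$ to be continuous at the Cantor set, where $f'=1$ is forced only in an averaged sense. That is the main obstacle: designing maps $I_n\to I_{n+1}$ whose derivative stays uniformly close to $1$ near points of $K$ while the ratio $\ell_{n+1}/\ell_n$ equals a fixed $c\ne1$ for infinitely many $n$.

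I would attempt this using several orbits of wandering intervals, i.e.\ a Cantor set whose gaps form finitely many $f$-orbits. On each orbit the ratios tend to $1$, and the orbits sit at incommensurable scales chosen so that the merged multiset exhibits gaps. I would then verify the $C^1$ gluing at points of $K$ via McDuff's/Herman's standard estimate for $\sup|f'-1|$ on $I_n$ in terms of $|\ell_{n+1}/\ell_n-1|$.
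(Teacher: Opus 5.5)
Your proposal has a genuine gap. Every route you sketch stays in the regime where $\ell_{n+1}/\ell_n\to1$ along each orbit of gaps (for a $C^1$ map this is equivalent to $Df\equiv1$ on $K$), and in that regime the conclusion is impossible.

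\emph{Why ratios tending to $1$ rule out a gap.} Fix $c>1$ and $N$ with $\ell_{n+1}\ge\ell_n/c$ for $n\ge N$. Take any $v<\ell_N$, and let $n>N$ be the first index with $\ell_n<v$ (it exists since $\ell_n\to0$). Then $\ell_n\ge\ell_{n-1}/c\ge v/c$. So every window $[v/c,v)$ with $v$ small contains some length. Now suppose $\lambda_{i+1}<\lambda_i$, so that no length lies strictly between them. Applying the window argument with $v\uparrow\lambda_i$ gives $\lambda_{i+1}\ge\lambda_i/c$ for all large $i$. This holds for non-monotone sequences too, so a transition ``passing only through sparse allowed levels'' is impossible. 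Adding further orbits of gaps only adds values, which can fill gaps but never create them. Hence your last paragraph cannot produce $\limsup\lambda_i/\lambda_{i+1}>1$.

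\emph{Why the intermediate idea fails.} You want $Df$ near $1$ at points of $K$ while $\ell_{n+1}/\ell_n=c\neq1$ infinitely often. By the mean value theorem, $\ell_{n+1}/\ell_n=Df(\xi_n)$ for some $\xi_n\in I_n$. The endpoints of $I_n$ lie in $K$, $|I_n|\to0$, and $Df$ is uniformly continuous. So the values of $Df$ at the endpoints control $Df$ on all of $I_n$, which rules this out.

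\emph{The missing idea: $Df$ need not equal $1$ on $K$.} $C^1$ regularity only requires $\log Df$ to be continuous; only its integral against the invariant measure is forced to vanish. This is the ``averaged'' statement you mention but did not exploit. If $f$ is affine on each gap, one needs $\log(\ell_{n+1}/\ell_n)=\varphi(n\alpha)$ for some continuous $\varphi$ on $\R/\Z$, possibly far from $0$.

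Following Kleptsyn--Navas, the paper takes $\log\ell_n=\psi(n\alpha)$ with $\psi(x)=-\sum_{m\ge1}r^{-m}\dist(Q_mx,\Z)$. Here $\alpha=\sqrt5-2$, $r=\sqrt\alpha$, and $Q_m$ are convergent denominators with $\dist(Q_m\alpha,\Z)=\alpha^m$. The series $\psi$ diverges almost everywhere. Its formal coboundary $\psi\circ R_\alpha-\psi$, however, has terms bounded by $r^{-m}\alpha^m=r^m$. So it converges uniformly to a continuous $\varphi$, with $\varphi(0)=-L_1=-\kappa\neq0$.

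\emph{How the paper forces the gap.} It analyzes $F(x)=\sum_{j\ge1}r^{-j}d(\alpha^jx)$, with $d=\dist(\cdot,\Z)$, exactly. This function satisfies $\ell_n=e^{-F(n)}$ and the self-similarity $F(x)=r^{-1}(d+F)(\alpha x)$. By induction on $k$, each interval $[M_k-1/2,M_k+1]$, with $M_k=q_0+\dots+q_k$, consists of separating points of $F$, and $F$ is affine of slope $\kappa$ there. This gives $\max_{|n|\le M_k}L_n=L_{M_k}$ and $\min_{|n|>M_k}L_n=L_{M_k+1}=L_{M_k}+\kappa$. Hence $\lambda_{2M_k+1}/\lambda_{2M_k+2}=e^{\kappa}$ for every $k$.

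Your proposal contains neither ingredient: neither a non-trivial derivative on $K$ realized as a continuous coboundary, nor any mechanism that actually forces a gap.
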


\subsection{Example: statement of the construction}

Theorem~\ref{thm: counterexample McDuff} is proved by an explicit construction, modifying a construction by V. Kleptsyn and A. Navas, proposed in~\cite{KN}. This construction provides Denjoy examples of class $C^1$, that are affine on the intervals the complement to the Cantor minimal set (that is of Lebesgue measure zero in their construction).

Namely, assume that we are given an irrational number $\alpha$ and a summable sequence of lengths~$(\ell_n)_{n\in \Z}$:
\begin{equation}\label{eq: sum lengths}
S:=\sum_{n\in\Z}\ell_n <\infty.
\end{equation}
Consider the orbit of the point $0$ under the rotation
\[R_\alpha:\mathbb S^1\to \mathbb S^1\quad\text{defined by}\quad R_\alpha(x):=x+\alpha \mod \Z,\]
that is, 
\[
x_n=(n\alpha \mod \Z)\in \mathbb S^1=\R/\Z.
\]

\begin{figure}
    \centering
    \includegraphics[width=0.5\linewidth]{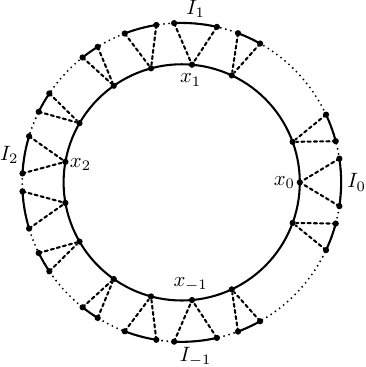}
    \caption{Blow-up of the orbit of the rotation by~$\alpha$}
    \label{fig:Denjoy}
\end{figure}

Then, one can consider a circle of length~$S$, on which a set of full measure is given by disjoint intervals $I_n$ of lengths $\ell_n$, whose circular order coincides with the that of the points~$x_n$ (see Figure~\ref{fig:Denjoy}). In other words, one considers the blow-up of the orbit $(x_n)$, replacing each point of this orbit with an interval of the length $\ell_n$. Then, one performs a continuous change of variables, given by the distribution function of the measure that is the sum of the Lebesgue measures inside these intervals, to ensure that the Cantor set that is the to their union complement is of measure zero.

On this circle $\R/S\Z$, one can consider the homeomorphism $f$, defined by the property that $f(I_n)=I_{n+1}$ for all~$n$, and that the restrictions $f|_{I_n}:I_n\to I_{n+1}$ are exactly affine. This map is semi-conjugate to the rotation by $\alpha$ via a map $p$ that collapses each $I_n$ to the corresponding point $x_n$ (and extended to the full circle by continuity): 
\[
p\circ f = R_{\alpha}\circ p.
\]

In their construction in~\cite{KN}, that we briefly recall in Section~\ref{sec: initial construction} below, V.~Kleptsyn and A.~Navas describe a way to choose lengths $\ell_n$ (for any given irrational $\alpha$) so that the resulting map $f$ is a $C^1$-diffeomorphism. The construction of an example that proves Theorem~\ref{thm: counterexample McDuff} is a modification of theirs one, that we explain in Section~\ref{sec: modification} below. 

Once implemented, this modification boils down to the following. We fix a particular value of the angle, 
\[
\alpha=\sqrt{5}-2=[0;\overline{4}], 
\]
and denote $r=\alpha^{1/2}$. Let~$p_m/q_m$ be the sequence of good approximations to~$\alpha$; the denominators $q_m$ then are recurrently defined by
\[
q_{m+1}=4q_m + q_{m-1}, \quad q_0=1, \quad q_{-1}=0.
\]

Now, consider the ``chainsaw'' function $d:\R\to\R_+$, defined by 
\[
d(x):=\dist(x,\Z) = \min (\{x\},\{1-x\}),
\]
and let the sequence $(L_n)_{n\in \Z}$ be defined (for the reasons that will be explained below) as the sum of the series
\begin{equation}\label{eq: chosen L}
L_n:=\sum_{j=1}^{\infty} r^{-j}\cdot d\left(\alpha^{j} n\right).
\end{equation}
We now define the sequence of lengths $(\ell_n)_{n\in\Z}$ by
\begin{equation}\label{eq: lengths}
\ell_n:=e^{-L_n}.    
\end{equation}
Note that $L_{-n}=L_n$, as $d(x)$ is an even function. The well-definedness of the sequence of lengths is guaranteed by the following statement (that will be proven below).
\begin{lemma}\label{l: convergence and summability}
    For every $n\in \Z$, the series~\eqref{eq: chosen L} converges. Also, the sequence $(\ell_n)$, defined by~\eqref{eq: lengths}, is summable.
\end{lemma}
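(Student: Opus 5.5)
The plan is to bound the terms of~\eqref{eq: chosen L} from above to get convergence, and then to bound $L_n$ from below by a single well-chosen term to get summability. All terms of~\eqref{eq: chosen L} are nonnegative, since $d\ge 0$. So for convergence it suffices to bound them above, and for a lower bound on $L_n$ it suffices to pick out one term.

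\emph{Convergence.} I will use the elementary inequality $d(x)\le |x|$, which holds because $0\in\Z$. It gives
\[
r^{-j}\, d(\alpha^j n)\le r^{-j}\alpha^j |n| = \alpha^{j/2}|n| = r^j |n| .
\]
Since $0<r=\alpha^{1/2}<1$, the series~\eqref{eq: chosen L} is dominated by the geometric series $|n|\sum_{j\ge1} r^j$. Hence it converges, with $0\le L_n\le |n|\, r/(1-r)$. Also $L_0=0$, so $\ell_0=1$.

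\emph{Summability.} I will show that $L_n\ge c\sqrt{|n|}$ for some constant $c>0$ and all $n\neq 0$. Fix $n\ne 0$ and let $j\ge 1$ be the smallest index with $\alpha^j|n|\le 1/2$; such an index exists because $\alpha^j\to 0$. I claim that $\alpha^j|n|>\alpha/2$. If $j=1$, this holds because $|n|\ge 1$. If $j>1$, minimality gives $\alpha^{j-1}|n|>1/2$. Thus $\alpha^j|n|\in(\alpha/2,1/2]$, and for $x\in[0,1/2]$ we have $d(x)=x$, so $d(\alpha^j n)=\alpha^j|n|>\alpha/2$. On the other hand, $\alpha^j|n|\le 1/2$ gives $\alpha^{-j}\ge 2|n|$, hence $r^{-j}=\alpha^{-j/2}\ge\sqrt{2|n|}$. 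Retaining only this term in~\eqref{eq: chosen L} yields
\[
L_n\ \ge\ \frac{\alpha}{2}\sqrt{2|n|}.
\]
Consequently $\ell_n=e^{-L_n}\le e^{-c\sqrt{|n|}}$ with $c=\alpha/\sqrt2$. The series $\sum_{n\in\Z} e^{-c\sqrt{|n|}}$ converges, for instance by comparison with $\int_0^\infty e^{-c\sqrt t}\,dt<\infty$ or with $1/n^2$. Therefore $(\ell_n)$ is summable.

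The only mildly delicate point is choosing the index $j$ so that $\alpha^j n$ lands in the region where the chainsaw function is linear and not too small. The minimal-index choice above handles this cleanly. I do not expect any genuine obstacle in this lemma.
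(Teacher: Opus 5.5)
Your proof is correct and follows the paper's argument. Convergence comes from dominating each term by $r^j|n|$. Summability comes from keeping the single term at the first scale $j$ where $\alpha^j|n|$ enters the linear region $[0,1/2]$ of $d$, which gives $L_n\gtrsim\sqrt{|n|}$ as in Lemma~\ref{lem: growth Ln}. The differences are cosmetic: the paper writes the retained term as $r^j|n|=\sqrt{\alpha^j|n|}\,\sqrt{|n|}$ and gets the slightly better constant $\sqrt{\alpha/2}$ instead of your $\alpha/\sqrt2$, and your choice of minimal index covers all $n\neq0$ rather than only $|n|\ge 3$.
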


The choices of the angle $\alpha$ and of the lengths $\ell_n$ fix the corresponding homeomorphism~$f$. Theorem~\ref{thm: counterexample McDuff} will be proven, once we show that this map satisfies its conclusions.

\begin{proof}[Scheme of the proof of Theorem~\ref{thm: counterexample McDuff}]
Our first statement is that the quotients of consecutive lengths $\ell_n/\ell_{n+1}$ do not tend to~$1$. Namely, for every $k\geq0$, set 
\begin{equation}\label{eq: define Mk}
M_k=q_0+q_1+q_2+\dots+q_k.
\end{equation} 
We then have the following statement, that is 
the main tool in the proof of Theorem~\ref{thm: counterexample McDuff}.

\begin{prop}\label{prop: arithmetic gap}
For every $k\geq0$, one has
\[
\max_{|n|\leq M_k}L_n=L_{M_k}
\quad
\text{and}
\quad
\min_{|n|\geq M_k+1}L_n=L_{M_k+1}.
\]
Moreover,
\[
L_{M_k+1}-L_{M_k}=L_1.
\]
\end{prop}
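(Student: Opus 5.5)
The plan is to make $L_n$ explicit through the arithmetic of $\alpha=\sqrt5-2$. This number satisfies $\alpha^{-1}=4+\alpha$, its convergents satisfy $p_m=q_{m-1}$, and
\[
q_m\alpha=q_{m-1}+(-1)^m\alpha^{m+1}.
\]
Iterating this identity gives
\[
\alpha^j q_m=q_{m-j}+\varepsilon_{m,j}\quad\text{for } j\le m,
\]
where $\varepsilon_{m,j}$ is an explicit alternating combination of terms of size $\asymp\alpha^{m+1}$. For $j>m$ there is no integer part: $\alpha^jq_m=\alpha^{j-m-1}\cdot q_m\alpha^{m+1}$ is a number of size $\asymp\alpha^{j-m-1}$ in $(0,1)$, since $q_m\asymp\alpha^{-(m+1)}$ by the Binet formula for the roots $\alpha^{-1}$ and $-\alpha$ of $x^2=4x+1$. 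Write $\theta_j(q_m)\in(-\tfrac12,\tfrac12]$ for the signed distance from $\alpha^jq_m$ to the nearest integer. These quantities are completely explicit, and in each case the magnitude is $|\theta_j(q_m)|\asymp\alpha^{|j-m-1|}$ up to a bounded factor. This also yields Lemma~\ref{l: convergence and summability}, since $r^{-j}\alpha^{j}n=r^{j}n$ is summable in $j$.

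Next, expand $n\ge0$ in the Ostrowski numeration $n=\sum_m a_mq_m$, with digits $0\le a_m\le4$ and the usual admissibility rule. The key technical step is a \emph{no-wrapping lemma}. For admissible digits and every $j$,
\[
\theta_j(n)=\sum_m a_m\,\theta_j(q_m),
\qquad\text{so that}\qquad
L_n=\sum_{j\ge1}r^{-j}\Bigl|\sum_m a_m\theta_j(q_m)\Bigr|.
\]
In other words, the signed sum never leaves $(-\tfrac12,\tfrac12]$. I would prove this with the standard Ostrowski tail estimate, namely that $\sum_{m\ge m_0}a_m\|q_m\alpha\|$ is bounded by $\|q_{m_0-1}\alpha\|$, transported through the shift $\alpha^j q_m\approx q_{m-j}$.

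Given this formula, the identity $L_{M_k+1}-L_{M_k}=L_1$ follows directly. The number $M_k$ has Ostrowski digits $a_0=\dots=a_k=1$, while $M_k+1$ has $a_0=2$ and $a_1=\dots=a_k=1$. Both digit strings are admissible. Adding the extra $q_0=1$ adds $\theta_j(1)=d(\alpha^j)=\alpha^j$ to each $j$-th term. So the claim reduces to checking that, for every $j$, the signed sum $\sum_{m\le k}\theta_j(q_m)$ has the same sign as $\alpha^j>0$, so that the absolute values add. I expect this to hold because the alternating signs of $\theta_j(q_m)$ in $m$ are dominated by the term with $m+1$ closest to $j$. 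The sign bookkeeping here is exactly what the explicit formulas for $\theta_j(q_m)$ provide.

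The main obstacle is the two extremal statements. For the maximum over $|n|\le M_k$, by evenness it suffices to take $0\le n\le M_k$. I would compare $L_n$ with $L_{M_k}$ term by term in $j$, hoping to show that $|\theta_j(n)|\le|\theta_j(M_k)|$ for every $j$. Failing that, I would aim for the weighted inequality, where the growth of $r^{-j}$ lets the large-$j$ terms, in which $\theta_j(n)\approx\alpha^jn$ is monotone in $n$, absorb losses at small $j$.

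For the minimum over $|n|\ge M_k+1$, I would split into two ranges. When $M_k+1\le n<q_{k+1}$, I would use the same digitwise comparison. When $n\ge q_{k+1}$, I would use a lower bound coming from the highest nonzero digit $a_m$ with $m\ge k+1$: it alone forces $|\theta_j(n)|\gtrsim\alpha^{|j-m-1|}$ near $j=m+1$, and the weight $r^{-(m+1)}$ there should already exceed $L_{M_k+1}$. If the termwise inequality fails, I would first try an induction on $k$ that uses the self-similarity $n\mapsto$ (shift of the Ostrowski digits). Under this shift $L$ transforms roughly as $L_{\text{shift}(n)}\approx r\,L_n$ plus a controlled correction, which would reduce the extremal claims at level $k$ to those at level $k-1$.
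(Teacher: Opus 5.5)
Your proposal has a genuine gap, and it sits exactly where the content of the proposition lies.

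First, the no-wrapping lemma is false as stated. Take $n=3=3q_0$, an admissible expansion, and $j=1$. Then $3\alpha\approx0.708$, so $\theta_1(3)\approx-0.292\neq3\theta_1(1)$. Similarly, for $n=16=4q_1$ and $j=2$ one has $16\alpha^2\approx0.892$, while $\theta_2(16)\approx-0.108$. The mechanism is that $\theta_{m+1}(q_m)=\alpha^{m+1}q_m\approx1/\sqrt{20}$. So the single term $a_m\theta_{m+1}(q_m)$ is already about $0.67$ when $a_m=3$ and about $0.89$ when $a_m=4$. The classical Ostrowski estimate only places $\sum_m a_m(q_m\alpha-p_m)$ in a window of length one (here $[-\alpha,1-\alpha)$), not in $(-\tfrac12,\tfrac12]$, and shifting to $j\ge2$ does not improve this. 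Hence your closed formula for $L_n$ is unavailable for general $n$.

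For the two particular integers $M_k$ and $M_k+1$ things do work. Iterating $\alpha M_k-M_{k-1}\in[\alpha-\alpha^2,\alpha]$ shows $\alpha^jM_k=N+t$ with $N\in\Z$ and $0\le t\le\tfrac12-\alpha^j$ for every $j\ge1$. So the identity $L_{M_k+1}-L_{M_k}=\sum_j r^j=\kappa=L_1$ can be rescued by a direct check. You left even this as an expectation, though. Also, what is needed is not only the sign of $\theta_j(M_k)$ but also $\theta_j(M_k)+\alpha^j\le\tfrac12$.

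Second, the two extremal statements are not proved. You list strategies, and the concrete ones fail:
\begin{itemize}
\item The termwise comparison fails at $k=1$: with $M_1=5$ and $n=3$, $|\theta_1(3)|\approx0.292>|\theta_1(5)|\approx0.180$.
\item The one-term lower bound for $n\ge q_{k+1}$ fails at $k=0$. For $n=4$ the $j=2$ term is $r^{-2}d(4\alpha^2)=4\alpha\approx0.944$, while $L_2=2\kappa\approx1.890$. In fact $L_4\approx1.951$ exceeds $L_2$ by only about $0.06$. Moreover, when $a_m=4$, wrapping can make $|\theta_{m+1}(n)|$ small. So no crude bound of this kind will do.
\end{itemize}
The induction on $k$ you mention at the end is the right instinct, but it is not formulated, and the ``controlled correction'' is the entire difficulty.

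The missing idea, which is what the paper does, is to work with the real-variable function $F(x)=\sum_{j\ge1}r^{-j}d(\alpha^jx)$. For it, $L_n=F(n)$ and the self-similarity $F(x)=r^{-1}\bigl(d(\alpha x)+F(\alpha x)\bigr)$ is exact. One proves by induction that every point $c$ of $[M_k-\tfrac12,M_k+1]$ is a separating point of $F$, meaning $F\le F(c)$ on $[0,c]$ and $F\ge F(c)$ on $[c,\infty)$, and that $F$ is affine of slope $\kappa$ there.

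The induction step uses only $0\le d\le\tfrac12$ and the slope $+1$ of $d$ on $[m,m+\tfrac12]$. Such an interval $[m-\tfrac12,m+1]$ for $F$ yields one for $d+F$, namely $[m+\tfrac12-r,m+r]$ with slope $1+\kappa$. Rescaling by $\alpha^{-1}$ then gives a slope-$\kappa$ interval for $F$ that contains $[M_{k+1}-\tfrac12,M_{k+1}+1]$, because $\alpha M_{k+1}-M_k\in[\alpha-\alpha^2,\alpha]$. Passing to non-integer points is what lets the induction close, since the relevant points $\alpha M_{k+1}$ are not integers. The identity $L_{M_k+1}-L_{M_k}=\kappa$ then comes for free from the slope, with no digit bookkeeping.
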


This proposition implies the upper limit part in Theorem~\ref{thm: counterexample McDuff}. Indeed, it implies that for every index $i$ of the form $i_k=2M_k+1$, one has 
\begin{equation}
\lambda_{i_k}=e^{-L_{M_k}}, \quad \lambda_{i_k+1}=e^{-L_{M_k+1}}:
\end{equation}
the lengths $\lambda_i$ not shorter than $e^{-L_{M_k}}$ correspond to $n=-M_k,\dots,M_k$, and the shorter ones start with~$\ell_{M_{k+1}}$. Thus,
\[
\frac{\lambda_{i_k}}{\lambda_{i_k+1}} = e^{L_1}.
\]
In particular, 
\[
\limsup_{i\to \infty}\frac{\lambda_{i}}{\lambda_{i+1}} \ge e^{L_1};
\]
moreover, as we will see from the proof, the inequality here actually is an equality. We illustrate Proposition~\ref{prop: arithmetic gap} in Figure~\ref{fig:graph}, where log-lengths $\log \ell_n$ are plotted (for $n\ge 0$; the other half is symmetric, $\ell_{-n}=\ell_n$). The reader can see the identical gaps, implied by Proposition~\ref{prop: arithmetic gap}, that correspond to the first few indices~$n=M_k$ (namely, to $n=0,1,5,22,94$). 

\begin{figure}[h!]
    \centering
    \includegraphics[width=0.9\linewidth]{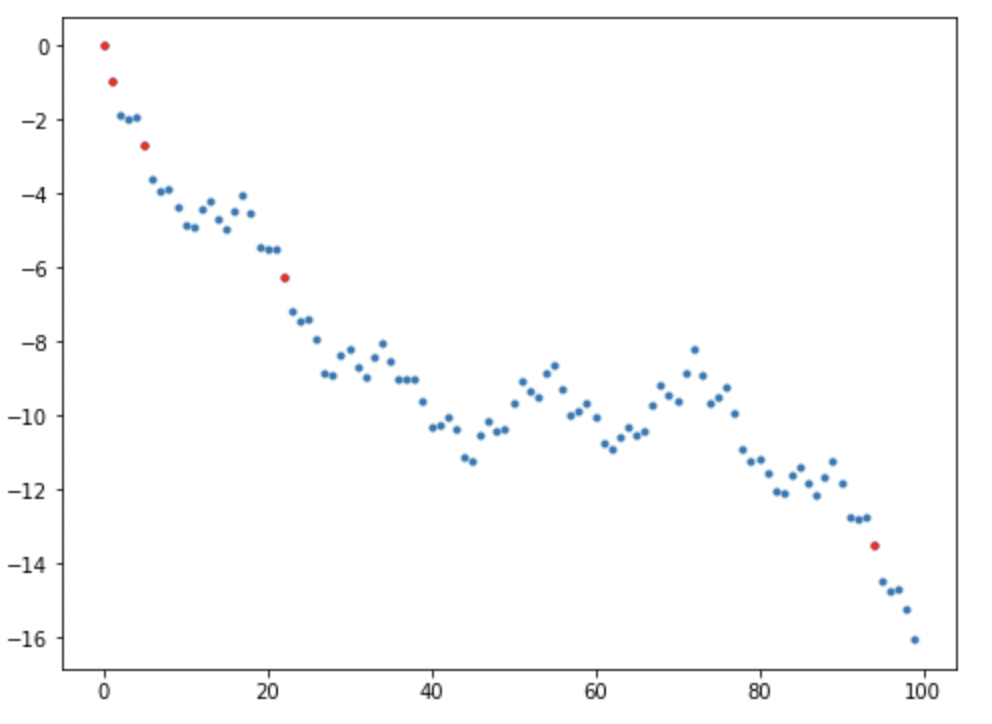}
    \caption{Log-lengths $\log \ell_n=-L_n$ for $n=0,1,\dots,99$; points, corresponding to the subsequence $n=M_k$, are marked by red.}
    \label{fig:graph}
\end{figure}

The second part of the proof of Theorem~\ref{thm: counterexample McDuff} is the $C^1$-regularity of the constructed map~$f$. In the same way as in~\cite{KN}, note that the map~$f$ is affine on each interval~$I_j$, with a constant derivative inside it:
\[
\log Df|_{\mathrm{int} \, I_j}= \log \frac{\ell_{j+1}}{\ell_j}.
\]
As the union of interiors of intervals $I_j$ is a set of full measure on the circle, the map $f$ is a $C^1$-diffeomorphism if and only if these values of $\log Df$ extend to a continuous function of the circle. As $\log Df$ is constant inside each interval $I_j$, such a continuous function should be of the type $\varphi \circ p(y)$, where $p:\R/S\Z \to \R/\Z$ is the projection that collapses each interval $I_j$ into the point~$x_j$ and semi-conjugates~$f$ to the rotation $R_{\alpha}$, and $\varphi$ is a continuous function on the circle~$\R/\Z$. 

Vice versa, if the function on the orbit $(x_n)$, defined by 
\begin{equation}\label{eq: log Df}
\varphi(x_n)= \log \ell_{n+1} - \log \ell_n,    
\end{equation}
extends to a continuous function on $\R/\Z$, then $f$ indeed is a $C^1$-diffeomorphism with $\log Df|_y=\varphi\circ p(y)$. The following statement confirms that this is indeed the case.
\begin{prop}\label{prop: continuous logarithmic slope - new}
    The function $\varphi$, defined on the orbit $(x_n)$ of rotation $R_{\alpha}$ by~\eqref{eq: log Df}, extends to a continuous function on the circle.
\end{prop}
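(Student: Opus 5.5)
The plan is to realize $\varphi$ as a uniformly convergent series of continuous functions on $\R/\Z$, one for each term of the series~\eqref{eq: chosen L} defining $L_n$. The key arithmetic input is that each power $\alpha^j$ is an integer combination of $1$ and $\alpha$. Since $\alpha=\sqrt5-2$ satisfies $\alpha^2=1-4\alpha$, induction on $j$ gives integers $A_j,B_j$ with
\[
\alpha^j=A_j+B_j\alpha,\qquad j\ge 1 .
\]
(One can check that $B_j=\pm q_{j-1}$, but this identification is not needed.) The function $d$ is $1$-periodic, so $d(\alpha^j n)=d(nA_j+nB_j\alpha)=d(B_j\, n\alpha)$. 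Hence, as a function of $x_n=n\alpha \bmod \Z$,
\[
d(\alpha^j n)=h_j(x_n),\qquad h_j(x):=d(B_j x),
\]
and $h_j$ is a well-defined continuous function on $\R/\Z$, because $B_j$ is an integer.

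Next, by Lemma~\ref{l: convergence and summability} both series $L_n$ and $L_{n+1}$ converge, so they can be subtracted termwise:
\[
\varphi(x_n)=\log\ell_{n+1}-\log\ell_n=L_n-L_{n+1}=\sum_{j\ge1} r^{-j}\bigl(h_j(x_n)-h_j(x_n+\alpha)\bigr).
\]
Define $g_j(x):=h_j(x)-h_j(x+\alpha)$; each $g_j$ is continuous on $\R/\Z$. The function $d$ satisfies $|d(a)-d(b)|\le d(a-b)$, being the distance to $\Z$. Applying this with $a=B_jx$ and $b=B_jx+B_j\alpha$ gives, for all $x$,
\[
|g_j(x)|\le d(B_j\alpha)=d(\alpha^j-A_j)=d(\alpha^j)=\alpha^j .
\]
Consequently $\sup|r^{-j}g_j|\le \alpha^{j}\alpha^{-j/2}=\alpha^{j/2}$, which is summable.

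By the Weierstrass M-test, $\Phi:=\sum_j r^{-j}g_j$ converges uniformly on $\R/\Z$ to a continuous function. By the display above, $\Phi(x_n)=\varphi(x_n)$ for every $n$. The orbit $(x_n)$ is dense because $\alpha$ is irrational, so $\Phi$ is the unique continuous extension of $\varphi$.

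The only step needing care is the reduction $d(\alpha^j n)=d(B_j n\alpha)$ together with the bound $|g_j|\le d(\alpha^j)$. This is exactly where the choice of a quadratic irrational $\alpha$ and of the exponent $r=\alpha^{1/2}$ enters: the weight $r^{-j}$ grows, but more slowly than $d(\alpha^j)=\alpha^j$ decays. Everything else is routine.
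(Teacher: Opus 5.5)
Your proof is correct and is essentially the paper's argument. The paper likewise writes $\log\ell_n=\psi(n\alpha)$ with $\psi(x)=-\sum_m r^{-m}d(Q_m x)$ for integer frequencies $Q_m$ (the continued-fraction denominators, identified via Lemma~\ref{lem: exact approximation}, i.e.\ your $B_j=\pm q_{j-1}$), and bounds the termwise increments under $R_\alpha$ by $r^{-m}|\beta_m|=r^m$ to get uniform convergence; your derivation of the frequencies directly from $\alpha^2=1-4\alpha$ is a more self-contained route to the same identification, and it automatically gets the index right ($Q_j=\pm q_{j-1}$, whereas the paper's stated choice $Q_m=q_m$ is off by one relative to its own lemma).
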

We illustrate this proposition by a numerical simulation of the function $\varphi$: its graph is shown on Figure~\ref{fig:graph}.

\begin{figure}[h!]
    \centering
    \includegraphics[width=0.9\linewidth]{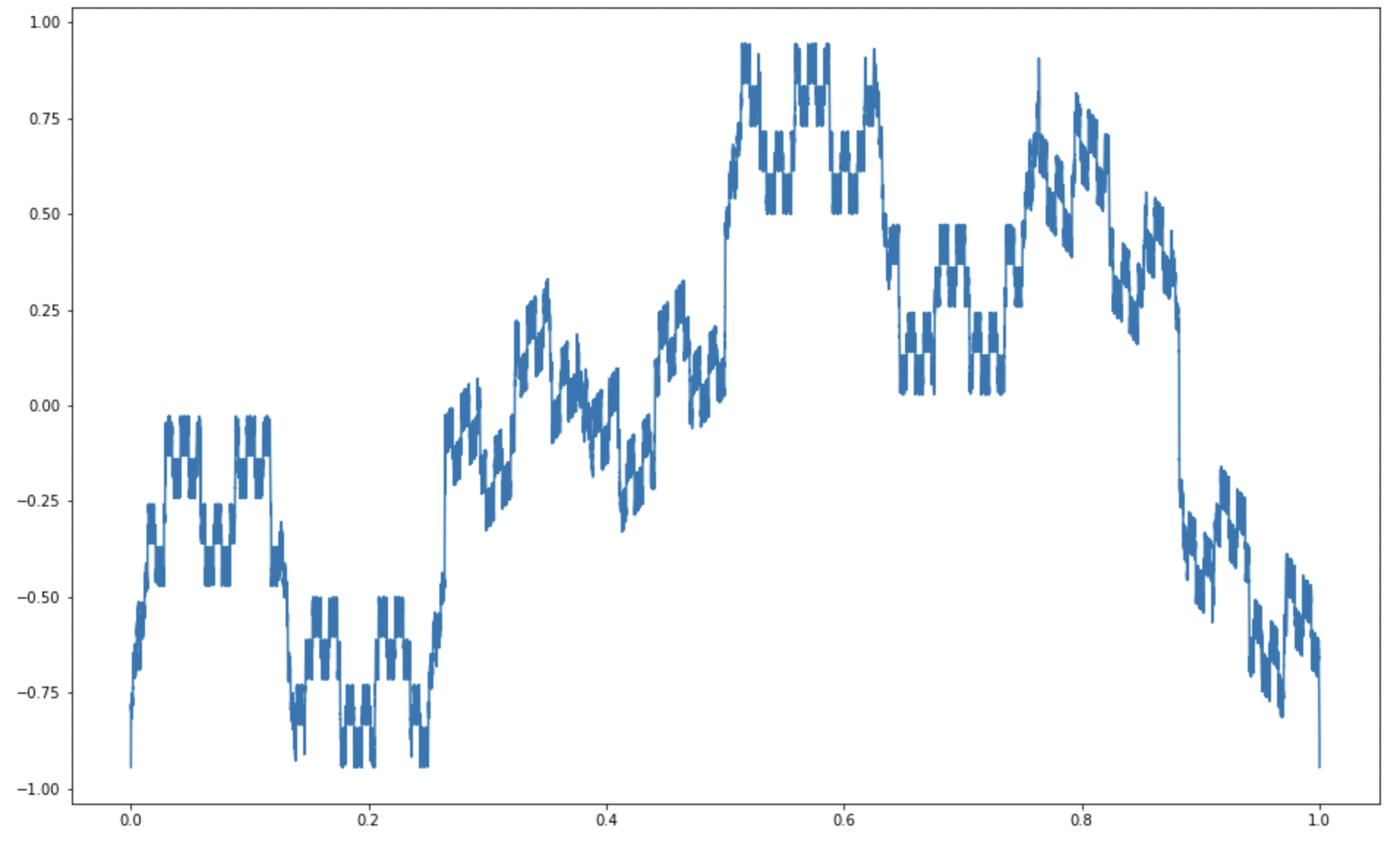}
    \caption{The graph of $\log Df|_y$ as a function of $p(y)$ (equivalently, in the coordinates after the semi-conjugacy to the rotation by~$\alpha$).}
    \label{fig:log-DF}
\end{figure}

We provide some necessary preliminaries for good approximations for~$\alpha$, recall Kleptsyn and Navas' construction and explain our ideas of its modification in Section~\ref{s: circle}, proving Proposition~\ref{prop: continuous logarithmic slope - new} (conditionally to Lemma~\ref{l: convergence and summability}) at the end of Section~\ref{sec: modification}. Next, we pass to the study of lengths $\ell_n$ in Section~\ref{sec: sequence}, proving Lemma~\ref{l: convergence and summability} in Section~\ref{sec: Definition and summability of the lengths} and then Proposition~\ref{prop: arithmetic gap} in Section~\ref{sec: gaps}. Once these three statements are established, the proof of Theorem~\ref{thm: counterexample McDuff} is complete.
\end{proof}

\section{Ideas of the construction}\label{s: circle}

\subsection{Preliminaries on good approximations}

For a given $\alpha=[a_0;a_1,a_2\ldots]\in \R\smallsetminus\mathbb Q$, it is known that its sequence of truncated continued fractions $p_j/q_j=[a_0;a_1,\ldots,a_j]$ satisfies the recurrence relations
\begin{equation}\label{eq: p_j,q_j}
p_{j+1}=a_{j+1}p_j+p_{j-1}\quad\text{and}\quad q_{j+1}=a_{j+1}q_j+q_{j-1},
\end{equation}
with the conventions $p_{-1}=1$, $p_0=a_0$, $q_{-1}=0$ and $q_0=1$. 

Recall that we have fixed $\alpha=\sqrt{5}-2=[0;\overline{4}]$; for this continued fraction,~\eqref{eq: p_j,q_j} yields
\begin{equation}\label{eq: 4q_j+q_{j+1}}
q_{-1}=0,\qquad q_0=1,\quad\text{and}\quad q_{j+1}=4q_j+q_{j-1}\quad\text{for every}\quad j\geq0
\end{equation}
and $p_j=q_{j-1}$ for every $j\ge 0$.

In the following lemma, we state the properties of the sequence of denominators $(q_j)_{j\geq -1}$ that will be used throughout the construction. 

\begin{lemma}\label{lem: exact approximation}
For every $j\geq0$, one has 
\[
q_j\alpha-q_{j-1}=(-1)^j\alpha^{j+1}.
\]
In particular, $\dist(q_j\alpha,\Z)=\alpha^{j+1}.$
\end{lemma}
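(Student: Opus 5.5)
I would prove the identity $q_j\alpha-q_{j-1}=(-1)^j\alpha^{j+1}$ by induction on $j$, using the recurrence~\eqref{eq: 4q_j+q_{j+1}} together with the quadratic equation satisfied by $\alpha$. Since $\alpha=\sqrt5-2$, we have $(\alpha+2)^2=5$, that is, $\alpha^2+4\alpha-1=0$, or equivalently
\[
1=4\alpha+\alpha^2 .
\]
This is the only algebraic fact about $\alpha$ the argument needs.

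For the base cases, take $j=0$: then $q_0\alpha-q_{-1}=\alpha=(-1)^0\alpha^{1}$. For $j=1$ we have $q_1=4$, so $q_1\alpha-q_0=4\alpha-1=-\alpha^2$ by the quadratic relation, which matches $(-1)^1\alpha^2$.

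For the inductive step, set $E_j:=q_j\alpha-q_{j-1}$. By the recurrence for $q_{j+1}$ (valid for $j\ge0$) and for $q_j$ (valid for $j\ge1$),
\[
E_{j+1}=q_{j+1}\alpha-q_j=(4q_j+q_{j-1})\alpha-(4q_{j-1}+q_{j-2})=4E_j+E_{j-1}
\quad (j\ge1).
\]
Assuming $E_j=(-1)^j\alpha^{j+1}$ and $E_{j-1}=(-1)^{j-1}\alpha^{j}$, we get
\[
E_{j+1}=(-1)^{j-1}\alpha^{j}\,(1-4\alpha)=(-1)^{j-1}\alpha^{j}\cdot\alpha^2=(-1)^{j+1}\alpha^{j+2},
\]
where we used $1-4\alpha=\alpha^2$. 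This closes the induction. The only point requiring care is the index bookkeeping: the step from $j$ to $j+1$ needs the recurrence at index $j-1$, which is why two base cases are checked.

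For the ``in particular'' part, note that $0<\alpha<1/2$, since $\sqrt5<2.5$. Hence $0<\alpha^{j+1}<1/2$ for all $j\ge0$. The number $q_j\alpha$ differs from the integer $q_{j-1}$ by exactly $\pm\alpha^{j+1}$, and this quantity has absolute value less than $1/2$. Therefore $q_{j-1}$ is the nearest integer to $q_j\alpha$, and
\[
\dist(q_j\alpha,\Z)=\alpha^{j+1}.
\]
There is no real obstacle here; the whole lemma is a direct computation.
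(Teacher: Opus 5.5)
Your proof is correct, and it reaches the identity by a slightly different route than the paper.

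The paper solves the recurrence explicitly. The characteristic polynomial $\lambda^2-4\lambda-1$ has roots $\alpha^{-1}$ and $-\alpha$, so $q_j=c_0\bigl(\alpha^{-(j+1)}-(-\alpha)^{j+1}\bigr)$. Forming $q_j\alpha-q_{j-1}$ then cancels the growing progression and leaves a multiple of $(-\alpha)^{j+1}$, whose constant is fixed by evaluating at $j=0$.

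You never write down $q_j$. You observe that $E_j=q_j\alpha-q_{j-1}$ obeys the same recurrence $E_{j+1}=4E_j+E_{j-1}$. You note that $(-1)^j\alpha^{j+1}$ obeys it too, because $-\alpha$ is a root of the characteristic polynomial; this is your relation $1-4\alpha=\alpha^2$. You then match two initial values.

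Your version is more elementary and self-contained. Only one root is needed, there is no constant $c_0$ to compute, and the index bookkeeping is explicit. You also justify the ``in particular'' clause via $0<\alpha<1/2$, which the paper leaves implicit.

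The paper's version gives, as a by-product, the closed form and exponential growth rate of $q_j$. It also explains conceptually why the combination $q_j\alpha-q_{j-1}$ isolates the small root.
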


\begin{proof}
The characteristic equation $\lambda^2-4\lambda-1=0$ for the recurrence relation~\eqref{eq: 4q_j+q_{j+1}} has two roots, $\alpha^{-1}$ and~$(-\alpha)$, and hence the sequence $(q_j)$ is a linear combination of geometric progressions with these ratios. From $q_{-1}=0$, one gets
\[
q_j=c_0 \cdot (\alpha^{-(j+1)} - (-\alpha)^{j+1}), 
\]
for some $c_0$ (actually, $q_0=1$ implies $c_0=1/(\alpha + \alpha^{-1})$). In the linear combination $q_j \alpha - q_{j-1}$, the geometric progression with the ratio $\alpha^{-1}$ gets canceled, leading to 
\[
q_j \alpha - q_{j-1} = -c_0 (\alpha^{-1}+\alpha) \cdot (-\alpha)^{j+1},
\]
thus implying the conclusion of the lemma (the constant factor can be checked either directly, or from evaluating both sides for $j=0$).

\end{proof}

\subsection{Initial construction}\label{sec: initial construction}

We briefly recall the construction of Kleptsyn and Navas~\cite{KN}, in which they extend the functions~$\varphi$ in~\eqref{eq: log Df} continuously to the whole circle while controlling the summability of the sequence of lengths $\ell_n$. Their construction works for every value of $\alpha$; we need here only its simplified version for $\alpha$ of bounded type. 

Namely, let $\alpha\in\mathbb R\setminus\mathbb Q$, and choose integers $Q_m,P_m$ such that $\beta_m:=Q_m\alpha-P_m$
is small. The construction of~\cite{KN} starts with considering a \emph{formal} Fourier-type series 
\begin{equation}\label{eq: psi-def}
    \psi(x) := \sum_{m\geq 1} d_m \big( \cos (2\pi Q_m x) -1 \big).
\end{equation}
for some (reasonably large) choice of the coefficients~$d_m$.
This series consists of nonpositive terms and (due to $d_m$ being large) diverges almost everywhere. However, it converges at $x=0$. One also can consider the formal (term-by-term) increment of the right hand side of~\eqref{eq: psi-def} over a rotation by $\alpha$,
\begin{equation}\label{eq: phi-def}
    \varphi(x):=\sum_{m\geq 1} d_m \big( \cos (2\pi Q_m (x+\alpha)) -\cos (2\pi Q_m x) \big).
\end{equation}
Taking such an increment (roughly) multiplies the coefficient for the Fourier frequency $Q_m$ by~$\beta_m$. Hence, if the products $b_m:=d_m\beta_m$ decrease sufficiently fast so that the series $\sum_m b_m$ would be absolutely convergent, the Fourier series~\eqref{eq: phi-def} converges uniformly. In this case, its sum $\varphi(x)$ is a continuous function on the circle. Hence, in that case the sum $\psi(x)$ is finite at any point of the orbit $(\{n\alpha\})_{n\in \Z}$ of $0$ under the rotation by~$\alpha$. The authors of~\cite{KN} then choose the log-lengths
\begin{equation}\label{eq: log L}
\log(\ell_n):=\psi(n\alpha)=\sum_m d_m\left(\cos(2\pi Q_m n\alpha)-1\right),    
\end{equation}
leading to the diffeomorphism $f$ with $\log Df = \varphi \circ p$, where $p$ is the semiconjugacy to the rotation. 

The construction is completed by fixing the choice of the frequencies $Q_m$ and the coefficients $d_m$ in such a way that the series $\sum_n \ell_n$ converges. The idea here is that for every $n$, there will be one of the summands in the right hand side of~\eqref{eq: log L} that will be sufficiently large. 

For a particular case of $\alpha$ of bounded type, it suffices to take as frequencies $Q_m=q_m$, the denominators of good approximations. Indeed, in this case $\beta_m$ is comparable to $1/q_m$, and the set of possible indices $n$ is split into segments of values that are comparable to $q_m$, for which 
\[
1-\cos 2\pi \beta_m n\ge c
\]
for some constant $c>0$. The latter implies that $\log \ell_n \le - c d_m$, and the convergence follows (see~\cite[Section~2.5]{KN} for the case where $\alpha$ is the golden mean).

\subsection{Modification}\label{sec: modification}
We start by noticing that the essential mechanism ensuring the convergence of $\varphi$ in the preceding construction does not depend on the particular choice of the cosine function; it only requires a (nonpositive) $1$-periodic Lipschitz function $g(x)$, vanishing at~$0$. Namely,~\eqref{eq: psi-def} can be written in terms of the function $g(x)=\cos 2\pi x -1$ as 
\begin{equation}\label{eq: psi from g}
\psi(x) = \sum_m d_m g(Q_m x),    
\end{equation}
and thus~\eqref{eq: phi-def} as 
\begin{equation}\label{eq: phi from g}
\varphi(x) = \sum_m d_m (g(Q_m x+\beta_m) -g(Q_m x)).    
\end{equation}
If $g$ is a Lipschitz function, the summands in the right hand side of~\eqref{eq: phi from g} are bounded uniformly by
\[
\left\|d_m\left(g(Q_mx+\beta_m)-g(Q_mx)\right)\right\|_\infty\leq \text{const}\cdot d_m|\beta_m|.
\]
Hence, if $d_m=b_m/|\beta_m|$ for some positive summable sequence $(b_m)$, the series~\eqref{eq: phi from g} converges uniformly on the circle,
\begin{equation}\label{eq: derivative norm}
\sum_m\left\|d_m\left(g(Q_mx+\beta_m)-g(Q_mx)\right)
\right\|_\infty\leq \text{const}\sum_m b_m<\infty,    
\end{equation}
and thus defines a continuous function~$\varphi$. Also, as $g(0)=0$, the series~\eqref{eq: psi from g} converges at $x=0$, and thus (due to the finiteness of the increments $\varphi(x)$) at every point of $x=n\alpha$ of its orbit. Thus, the lengths $\ell_n$, given by 
\[
\log \ell_n := \psi(n\alpha) = \sum d_m g(Q_m \cdot n\alpha),
\]
are well-defined. Finally, the summability condition for the lengths~$\ell_n$ is still to be verified separately.

In our construction, we replace the function $g(x)=\cos 2\pi x - 1$ by the function 
\[
g(x):=-d(x),\qquad d(x):=\dist(x,\mathbb Z).
\]
One of the main advantages of using the function $d(\cdot)=\dist(\cdot,\Z)$ in the construction is its piecewise-affine structure. On certain carefully chosen intervals, this allows us to obtain explicit affine expressions for the logarithmic lengths and thereby establish the required gap properties. See section \ref{sec: gaps} below.

Thus, we define, for all $n\in\mathbb Z$
\begin{equation}\label{eq: 1st lenghts}
\log(\ell_n)=\sum_m d_m\cdot g(n\beta_m)=-\sum_m d_m\cdot d(n\beta_m).
\end{equation}

We also fix the choice $d_m=r^{-m}$, and $Q_m=q_m$, $P_m=p_m=q_{m-1}$. As for our choice of $\alpha=[0;\overline{4}]$ one has $\beta_m=(-1)^{m-1} \alpha^m$ (see Lemma~\ref{lem: exact approximation}), the expression~\eqref{eq: 1st lenghts} with these choices becomes exactly the choice of lengths given by~\eqref{eq: lengths} and \eqref{eq: chosen L} in the introduction.
The above arguments immediately imply that, conditionally to Lemma~\ref{l: convergence and summability} (that is needed to ensure that the example actually is well-defined), the resulting map $f$ is indeed $C^1$-regular.

\begin{proof}[Proof of Proposition~\ref{prop: continuous logarithmic slope - new}]
    One has $b_m=|\beta_m|\cdot d_m =\alpha^m r^{-m} = r^m$. Thus, the series $\sum_m b_m$ converges (as a geometric series). Hence, due to~\eqref{eq: derivative norm}, so does the series~\eqref{eq: phi from g} for the function~$\varphi$. This implies that the log-derivatives $\log Df$ Lebesgue-almost everywhere (that is, inside the union of the intervals $I_j$) coincide with the continuous function $\varphi\circ p$. Together with the continuity of~$f$, this implies that $f$ is a $C^1$-diffeomorphism with $\log Df = \varphi \circ p$.
\end{proof}

\section{Sequence of lengths}\label{sec: sequence}

In order to study the behaviour of the sequence of log-lengths $\log \ell_n=-L_n$, we change the viewpoint on the series~\eqref{eq: 1st lenghts}, defining it. 

Previously, in order to ensure that the function $f$ is of class $C^1$, as in following~\cite{KN}, it was considered as a sequence of values of the function $\psi$ on the circle on the orbit of rotation by $\alpha$. This function $\psi$ was a sum of a series~\eqref{eq: psi from g} with more and more oscillating general terms $d_m g(Q_m x)$, and with the coefficients $d_m$ not tending to zero, this series diverges almost everywhere, making its values over the orbit $(n\alpha)_{n\in \Z}$, where the values are finite, quite hard to study. 

Now, the new viewpoint that we introduce is that the series~\eqref{eq: 1st lenghts} can be seen as a value of a function on the real line, given by the sum of the series
\begin{equation}\label{eq: F series}
\sum_m d_m g(\beta_m x),    
\end{equation}
evaluated at an integer point $x=n$. Contrary to the function $\psi$, the series~\eqref{eq: F series} can, due to $g(0)=0$ and due to the rescaling by an exponentially small factor $\beta_m$, converge everywhere on $\R$. This will actually be the case for our choice of functions $g(x)=-d(x)$, $d_m=r^{-m}$ and of the angle $\alpha$: we will see that the resulting sum~\eqref{eq: F series} has nice properties, allowing to analyze it. 

On Figure~\ref{fig: d and its rescaling}, one can see the graphs of the function $d(x)$ and the first two its rescaled copies $r^{-m}d(\alpha^m x)$. It is easy to notice that the sum~\eqref{eq: F series} is thus a piecewise-linear function on every finite interval: for every interval $[0,A]$, there are only a finite number of summands that are not exactly linear on it.

We thus define the function
\begin{equation}\label{eq: define F}
F:\R\to \R, \quad F(x)=\sum_{j=1}^{\infty} r^{-j} d(\alpha^jx),    
\end{equation}
so that $\log \ell_n = - L_n = -F(n)$.

\begin{figure}
    \centering
    \includegraphics[width=0.9\linewidth]{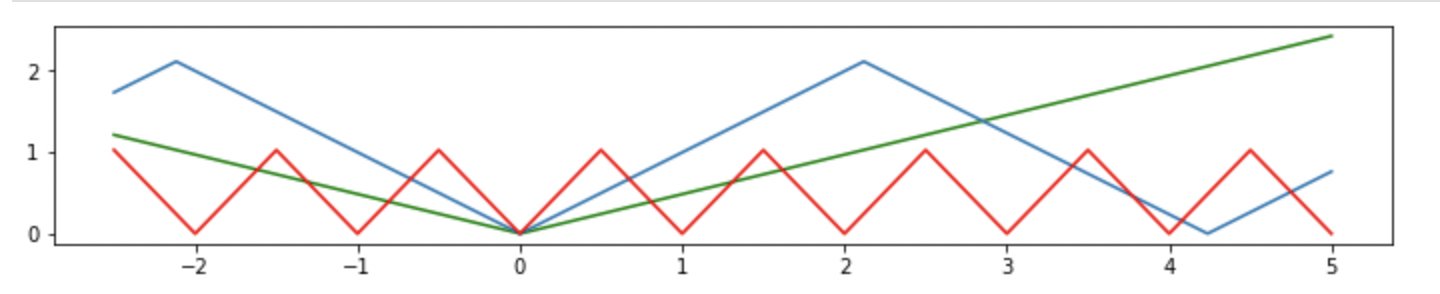}
    \caption{The function $d(x)$ and its rescalings $r^{-j} d(\alpha^j x)$ for $j=1,2.$}
    \label{fig: d and its rescaling}
\end{figure}

\subsection{Definition and summability of the lengths}\label{sec: Definition and summability of the lengths}

\begin{lemma}\label{lem: growth Ln}
For every integer $n$ with $|n|\geq3$, one has
\[
L_n\geq\sqrt{\frac{\alpha}{2}}\sqrt{|n|}.
\]
\end{lemma}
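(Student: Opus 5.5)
The plan is to bound the whole series $L_n=\sum_{j\ge1} r^{-j}d(\alpha^j n)$ from below by a single, well-chosen term. All terms are nonnegative, since $d\ge 0$ and $r^{-j}>0$. Recall $r=\alpha^{1/2}$, so $r^{-j}=\alpha^{-j/2}$. By symmetry ($d$ is even) we may assume $n>0$.

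The key choice is the index $j$: let $j\ge 1$ be the smallest integer with $\alpha^j n\le \tfrac12$. It exists because $\alpha^j n\to 0$. Minimality gives a lower bound at this index.

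\emph{Case $j\ge2$.} Minimality means $\alpha^{j-1}n>\tfrac12$.

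\emph{Case $j=1$.} Then $\alpha^{j-1}n=n\ge 1>\tfrac12$ anyway. In fact, for $n\ge3$ one has $\alpha n>0.7>\tfrac12$, so $j\ge2$ automatically; either way we get $\alpha^{j-1}n>\tfrac12$.

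In both cases
\[
\tfrac{\alpha}{2}<\alpha^j n\le \tfrac12 .
\]

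Because $0\le \alpha^j n\le \tfrac12$, the chainsaw function is linear there: $d(\alpha^j n)=\alpha^j n$. Hence the $j$-th term equals
\[
\alpha^{-j/2}\,\alpha^j n=\alpha^{j/2}n=\sqrt{\alpha^j n}\,\sqrt{n}\ \ge\ \sqrt{\tfrac{\alpha}{2}}\,\sqrt{n}.
\]
Dropping all other (nonnegative) terms yields $L_n\ge\sqrt{\alpha/2}\,\sqrt{|n|}$.

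Along the way one should also note that the series converges, which is part of Lemma~\ref{l: convergence and summability}. Using $d(x)\le |x|$, the terms satisfy $r^{-j}d(\alpha^j n)\le \alpha^{j/2}|n|$, which is summable since $\alpha<1$. There is no real obstacle here; the only point needing care is the choice of $j$ guaranteeing $\alpha^j n\in(\alpha/2,1/2]$, and checking that this index is at least $1$. The restriction $|n|\ge3$ is harmless for this argument.
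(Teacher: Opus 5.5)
Your proof is correct and follows the paper's argument: you bound $L_n$ from below by the single term at the first index where $\alpha^j|n|$ drops to at most $1/2$, use minimality to get $\alpha^j|n|>\alpha/2$, and use that $d$ is linear on $[0,1/2]$. The differences are only cosmetic: the paper labels this term $j+1$ and uses a strict inequality. You are also right that the argument does not need the hypothesis $|n|\ge 3$.
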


\begin{proof}
Let $j=j_n$ be the smallest nonnegative integer such that $|n|\alpha^{j+1}<1/2.$ Since $|n|\alpha>1/2$ whenever $|n|\geq3$, it follows that $j\geq1$. By the minimality of $j$, we have $|n|\alpha^j\geq1/2$, and hence $|n|\alpha^{j+1}\geq\alpha/2.$ Since $|n|\alpha^{j+1}<1/2$, we have $d\left(n\alpha^{j+1}\right)=|n|\alpha^{j+1}.$ 
Thus,
\[
L_n = \sum_m r^{-m} d(\alpha^m n) \ge r^{-(j+1)} d(\alpha^{j+1} n) = r^{(j+1)} n = \sqrt{\alpha n} \cdot \sqrt{\alpha^{j} n} \ge\sqrt{\frac{\alpha}{2}} \cdot \sqrt{n}.
\]

\end{proof}

\begin{remark}
    It is interesting to note that the rescalings 
    \begin{equation}\label{eq: rescaled F}
       r^k F(\alpha^{-k} x) = \sum_{m=-(k-1)}^{\infty} r^{-m} d(\alpha^m x)        
    \end{equation}
    converge (uniformly on any compact interval) to the limit 
    \[
        F_{\infty}(x) := \sum_{m=-\infty}^{\infty} r^{-m} d(\alpha^m x),
    \]
    that satisfies the self-similarity relation
    \[
        F_{\infty}(x) = r F_{\infty}(\alpha^{-1} x);
    \]
    see Figure~\ref{fig: F till 1000} for the graph of $F$ on $[0,1000]$.
    As $r=\alpha^{1/2}$, the occurrence of the power~$1/2$ in the conclusion of Lemma~\ref{lem: growth Ln} is quite natural.
\end{remark}

\begin{figure}
    \centering
    \includegraphics[width=0.7\linewidth]{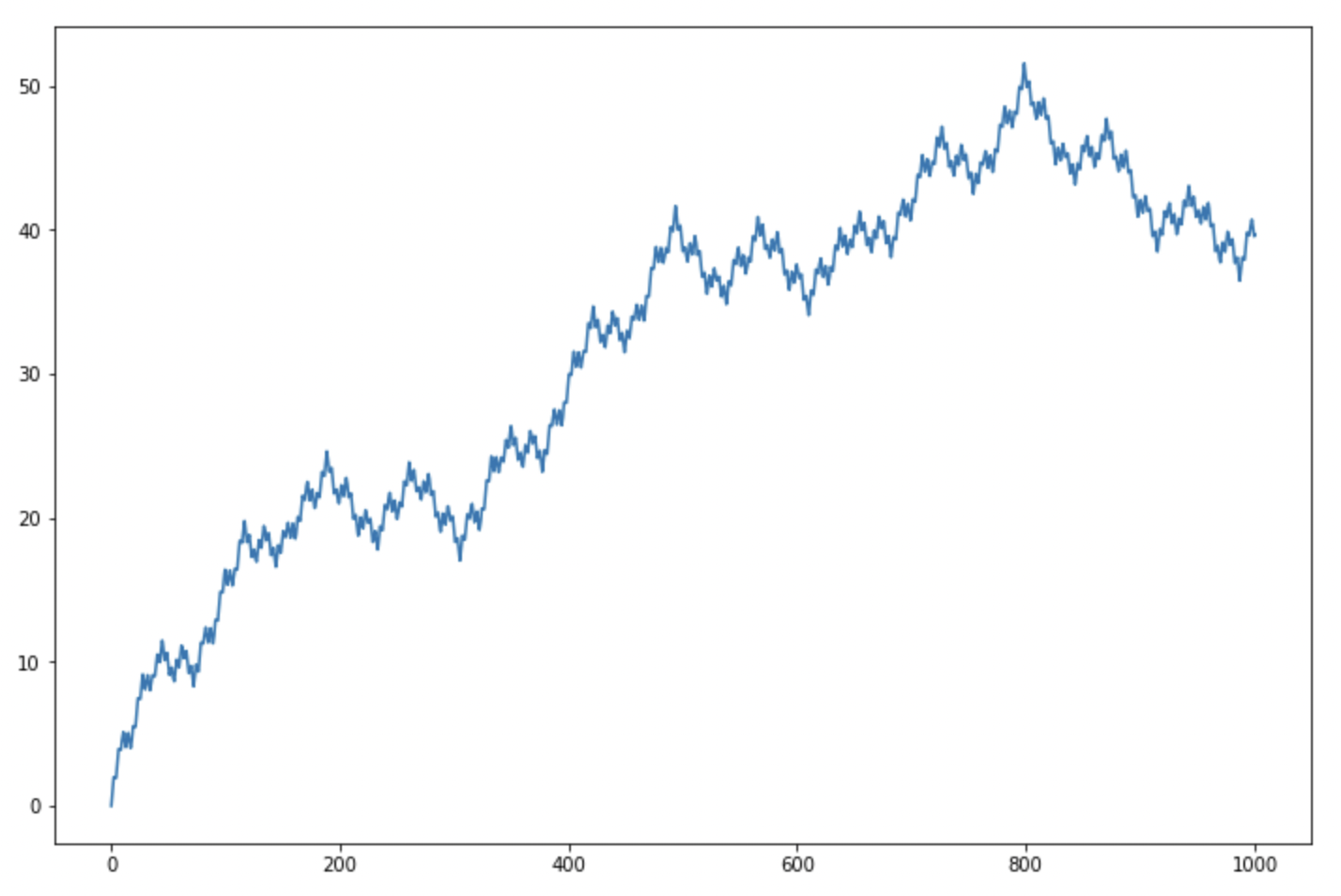}
    \caption{Graph of the function $F$ on $[0,1000]$.}
    \label{fig: F till 1000}
\end{figure}

\begin{proof}[Proof of Lemma~\ref{l: convergence and summability}]
The function $d(x)$ is 1-Lipschitz, and hence the summand $r^{-m} d(\alpha^m x)$ is $\alpha^m r^{-m} = r^m$-Lipschitz. Thus (as $d(0)=0$), it satisfies an upper bound by a geometric series
\[
r^{-m} d(\alpha^m x) \le r^m |x|,
\]
and thus the series~\eqref{eq: chosen L} converges. 

By Lemma~\ref{lem: growth Ln},
\[
\sum_{n\in\mathbb Z}\ell_n=\sum_{n\in\mathbb Z}e^{-L_n}\leq\sum_{n\in\mathbb Z} e^{-\sqrt{\frac{\alpha}{2}}\sqrt{|n|}}  <\infty.
\]
\end{proof}

\subsection{Gaps in the sequence of lengths}\label{sec: gaps}

This section is devoted to the proof of Proposition~\ref{prop: arithmetic gap}; once this is done, the proof of Theorem~\ref{thm: counterexample McDuff} will be complete. 

We start by making several remarks on the behaviour of the function~$F$. Let 
\begin{equation}\label{eq: define kappa}
    \kappa:=\sum_{j=1}^{\infty} r^j = \frac{r}{1-r}.
\end{equation}
Recall that $\alpha^{-1}=\sqrt{5}+2>4$, thus implying that $r=\sqrt{\alpha}<1/2$ and $\kappa=r/(1-r)<1$.
Now, we have the following statement.
\begin{lemma}\label{lem: initial line}
    The function $F$ is $\kappa$-Lipschitz. Moreover, $F(t)=\kappa t$ for all $t\in [0,1/(2\alpha)]$; in particular, $L_1=F(1)=\kappa$.
\end{lemma}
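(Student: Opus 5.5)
The plan is to handle the Lipschitz bound and the exact linearity separately, working term by term with the summands $r^{-j}d(\alpha^j x)$ of the series~\eqref{eq: define F} for $F$.

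\emph{Lipschitz bound.} The function $d$ is $1$-Lipschitz, so the $j$-th summand $r^{-j}d(\alpha^j x)$ is Lipschitz with constant $r^{-j}\alpha^j=r^{2j}r^{-j}=r^j$, exactly as noted in the proof of Lemma~\ref{l: convergence and summability}. The series converges pointwise, so for any $s,t\in\R$ we can sum the individual bounds:
\[
|F(s)-F(t)|\le \sum_{j\ge1} r^j|s-t| = \kappa|s-t|.
\]
Hence $F$ is $\kappa$-Lipschitz.

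\emph{Linearity near the origin.} Take $t\in[0,1/(2\alpha)]$. For every $j\ge1$ we have $0\le \alpha^j t\le \alpha t\le 1/2$. On $[0,1/2]$ the chainsaw function satisfies $d(y)=y$, so $d(\alpha^j t)=\alpha^j t$ for all $j$ simultaneously. Therefore
\[
F(t)=\sum_{j\ge1} r^{-j}\alpha^j t=\sum_{j\ge1} r^j t=\kappa t.
\]

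\emph{Value at $1$.} Since $\alpha<1/4$, we have $1/(2\alpha)>2>1$, so $t=1$ lies in the interval above. This gives $L_1=F(1)=\kappa$.

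There is no real obstacle in this argument. The only point needing care is that the interval is $[0,1/(2\alpha)]$ rather than $[0,1/2]$: this works because the series starts at $j=1$, so the largest argument that occurs is $\alpha t$, not $t$. Beyond that, one just checks that $r^{-j}\alpha^j=r^j$, which holds because $r^2=\alpha$.
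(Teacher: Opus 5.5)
Your proof is correct and follows the paper's own argument: it sums the Lipschitz constants $r^{-j}\alpha^j=r^j$ of the summands to obtain $\kappa$, and notes that every summand is exactly linear on $[0,1/(2\alpha)]$ because $\alpha^j t\le 1/2$ there. You also spell out the details the paper leaves implicit, namely that $d(y)=y$ on $[0,1/2]$ and that $1\le 1/(2\alpha)$ since $\alpha<1/4$.
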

\begin{proof}
    The function $d(x)$ is $1$-Lipschitz, thus $r^{-j} d(\alpha^j x)$ is $r^j$-Lipschitz. The value of $\kappa$ in~\eqref{eq: define kappa} is thus the sum of the Lipschitz constants of the summands in the definition~\eqref{eq: define F} of~$F$. Also, all these summands are exactly linear on $[0,1/(2\alpha)]$.
\end{proof}

To describe the gap-creating behaviour of the function~$F$, let us introduce the following definitions.
\begin{definition}
We say that a point $c\ge 0$ is a \emph{separating point} for a function $G:[0,\infty)\to \R$, if $G(x)\le G(c)$ for every $x\in[0,c]$, and $G(x)\ge G(c)$
for every $x\ge c$.
\end{definition}
\begin{definition}
Say that the interval $[a,b]\subset [0,\infty)$ is \emph{$\lambda$-good} for the function~$G$, if every point $c\in [a,b]$ is a separating point for $G$, and additionally, the restriction $G|_{[a,b]}$ is affine with the slope $\lambda$: 
\[
G(c)=G(a)+\lambda (c-a)\quad \forall c\in [a,b],
\]
\end{definition}

Proposition~\ref{prop: arithmetic gap} will then follow from the following stronger statement.

\begin{prop}\label{prop: separating Mk intervals}
For every $k\geq 0$, the interval $\left[M_k-1/2,M_k+1\right]$ is $\kappa$-good for~$F$.

\end{prop}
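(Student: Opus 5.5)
The plan is to prove the statement by induction on $k$, using a self-similarity of $F$. Reindexing the series~\eqref{eq: define F} gives, for all $y\in\R$,
\[
F(y)=\frac{1}{r}\Bigl(F(\alpha y)+d(\alpha y)\Bigr).
\]
This identity relates the behaviour of $F$ near $M_{k+1}$ to the behaviour of $F+d$ near $\alpha M_{k+1}$. Lemma~\ref{lem: exact approximation} gives $\alpha q_{j+1}=q_j-(-\alpha)^{j+2}$. Since $M_{k+1}=q_0+\sum_{j=0}^{k}q_{j+1}$, this yields
\[
\alpha M_{k+1}=M_k+\varepsilon_k,\qquad \varepsilon_k=\alpha-\alpha^2+\alpha^3-\dots\pm\alpha^{k+2}.
\]
Hence $\varepsilon_k$ lies strictly between $\alpha-\alpha^2$ and $\alpha$, so it is close to $\alpha/(1+\alpha)$. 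The crucial numerical fact is $\varepsilon_k-\alpha/2>0$, which holds because $1/(1+\alpha)>1/2$.

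\textbf{Base case and affinity.} The base case $k=0$, i.e.\ the interval $[1/2,2]$, follows almost from Lemma~\ref{lem: initial line}. Indeed, $F(t)=\kappa t$ on $[0,1/(2\alpha)]\supset[0,2]$, and for $x\ge 2$ the Lipschitz bound together with a direct look at the first summand gives $F(x)\ge F(2)$. For the inductive step, take $y\in[M_{k+1}-1/2,M_{k+1}+1]$. Then
\[
\alpha y\in[M_k+\varepsilon_k-\alpha/2,\;M_k+\varepsilon_k+\alpha]\subset[M_k,\,M_k+1/2]\cap[M_k-1/2,M_k+1].
\]
On this range $d(\alpha y)=\alpha y-M_k$, and by the induction hypothesis $F$ is affine there with slope $\kappa$. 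Therefore $F$ is affine on the new interval with slope $r^{-1}(\alpha\kappa+\alpha)=r(\kappa+1)=\kappa$. This settles the affinity part of $\kappa$-goodness.

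\textbf{Separating property (main obstacle).} For $c$ in the new interval and an arbitrary $y$, the identity reduces the claim to showing that $\alpha c$ is a separating point for $H:=F+d$, not merely for $F$. The induction hypothesis controls $F(\alpha y)$ versus $F(\alpha c)$, but $d(\alpha y)$ can be as large as $1/2$, while $d(\alpha c)\approx\varepsilon_k$.

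\emph{Side $y\ge c$.} This side is harmless. The inequality $H(x)\ge H(\alpha c)$ for $x\ge\alpha c$ follows from the separating property of $F$ at $\alpha c$ on $[\alpha c,M_k+1]$ (where $d$ is increasing up to $M_k+1/2$), and from $F(x)\ge F(M_k+1)$ plus a margin beyond that.

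\emph{Side $y\le c$.} Here one needs
\[
F(x)+d(x)\le F(\alpha c)+d(\alpha c)\quad\text{for all } x\le\alpha c.
\]
I would strengthen the induction hypothesis to include a quantitative margin:
\[
F(x)\le F(M_k)-\tfrac12+\varepsilon_k\quad\text{whenever } x\le M_k-\tfrac12,\ \text{with } d(x)=\tfrac12 \text{ at worst}.
\]
In the intermediate range $x\in[M_k-1/2,\alpha c]$, one uses the explicit affine form of $F$ together with $d(x)=|x-M_k|$. The slope of $H$ is then $\kappa\pm1$, and one checks the values at the endpoints $M_k-1/2$ and $M_k$. The margin itself should propagate through the identity: applying it at level $k$ and dividing by $r<1/2$ enlarges the drop $F(M_{k+1})-F(y)$ for $y\le M_{k+1}-1/2$ by a factor $1/r>2$. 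This should comfortably dominate the extra $1/2$ coming from $d$ at the next level, while the base case is verified by hand from $F(t)=\kappa t$ on $[0,2.1]$.

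Formulating exactly the right margin inequality so that it is preserved by the map $G\mapsto r^{-1}(G(\alpha\,\cdot)+d(\alpha\,\cdot))$ is the step I expect to need the most care. I would first try the bound
\[
\sup_{x\le M_k-1/2}F(x)\le F(M_k-\tfrac12),
\]
i.e.\ the separating property itself, combined with the Lipschitz estimate of Lemma~\ref{lem: initial line} applied on the short window $[M_k-1/2,\alpha c]$, and see whether the numerical constants ($\kappa<1$, $r<1/2$, $\varepsilon_k>\alpha/2$) close the argument.
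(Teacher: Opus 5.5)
Your framework is the paper's: the identity $F(y)=r^{-1}\tF(\alpha y)$, the formula $\alpha M_{k+1}=M_k+\varepsilon_k$, the affinity computation, and the reduction to showing that $\alpha c$ is a separating point for $H=F+d$ (this is what Lemma~\ref{lem: transfer separating interval} and Lemma~\ref{lem: step} do). The gap is that the separating property, which you correctly identify as the crux, is left open, and the remedy you sketch points the wrong way. No strengthened hypothesis is needed. The missing observation is that $d$ attains its maximum $\frac12$ exactly at $M_k-\frac12$. So the separating property of $F$ at $M_k-\frac12$ already gives $H(x)\le F(M_k-\frac12)+\frac12=H(M_k-\frac12)$ for all $x\le M_k-\frac12$. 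Next use that $F$ is \emph{exactly} affine of slope $\kappa$ on $[M_k-\frac12,M_k+1]$; a Lipschitz bound gives the inequality in the wrong direction here. Then $H$ has slopes $\kappa-1,\kappa+1,\kappa-1$ on the three half-unit pieces. With $t=\alpha c-M_k$, one gets $H(\alpha c)\ge H(M_k-\frac12)$ if and only if $t\ge\frac{1-\kappa}{2(1+\kappa)}=\frac12-r$. So the numerical fact that matters is $\varepsilon_k-\frac{\alpha}{2}>\frac12-r$, not merely $\varepsilon_k>\frac{\alpha}{2}$. The margin you propose, $F(x)\le F(M_k)-\frac12+\varepsilon_k$, is actually weaker than the bound $F(x)\le F(M_k)-\frac{\kappa}{2}$ already given by the hypothesis. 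Inserting it would require $\varepsilon_k\ge r/2$, which fails because $\varepsilon_k<\alpha=r^2<r/2$.

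The side $x\ge\alpha c$ is not harmless either. $H$ decreases on $[M_k+\frac12,M_k+1]$, and beyond $M_k+1$ you only know $H\ge F\ge F(M_k+1)=H(M_k+1)$. So you need $H(\alpha c)\le F(M_k+1)$, i.e.\ $t\le\frac{\kappa}{1+\kappa}=r$, i.e.\ $\varepsilon_k+\alpha<r$. This holds ($2\alpha\approx0.472<r\approx0.486$), but it is the tightest inequality in the whole argument and has to be checked. Finally, the base case is not justified as written. A Lipschitz bound gives no lower bound on the unbounded range $x\ge2$, and the first summand is at most $\frac{1}{2r}<2\kappa=F(2)$. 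You need the self-similarity iteration used in the paper. For $x>\frac{1}{2\alpha}$, choose $j\ge1$ with $\alpha^jx\in[\frac12,\frac1{2\alpha}]$. Then $F(x)\ge r^{-j}\tF(\alpha^jx)\ge r^{-1}\kappa=\frac{1}{1-r}>2\kappa$, because $\tF\ge\kappa$ on that interval. With that bound, your direct treatment of $[\frac12,2]$ works, bypassing the paper's detour through $[0,1]$ and the transfer lemma.
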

We illustrate this proposition by the graph of the function $F$ on $[0,100]$: see Figure~\ref{fig: good intervals}, where $\kappa$-good intervals, corresponding to the first few values of $M_k$ (namely, to $1,5,22,94$), are shown.

\begin{figure}
    \centering
    \includegraphics[width=0.9\linewidth]{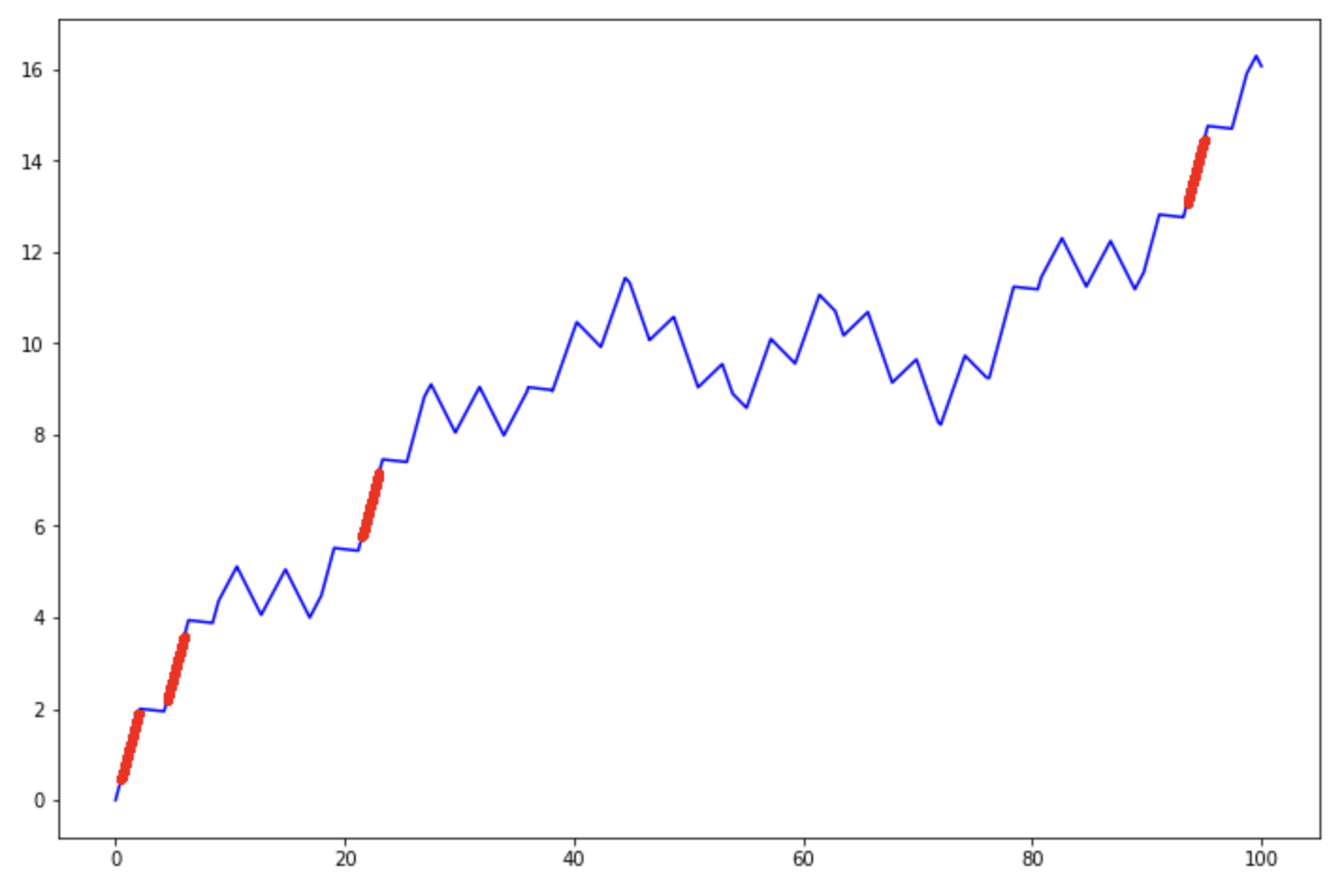}
    \caption{Graph of the function $F$ on $[0,100]$; bold red segments correspond to the intervals of the form $[M_k-1/2,M_k+1]$}
    \label{fig: good intervals}
\end{figure}

We will prove Proposition~\ref{prop: separating Mk intervals} by induction, using the self-similarity properties of the function~$F$. Namely, denote
\begin{equation}
    \tF(x):=d(x)+F(x)
    =\sum_{j=0}^{\infty} r^{-j} d(\alpha^j x);
\end{equation}
then, one has
\begin{equation}\label{eq: F,H}
F(x)=r^{-1}\tF(\alpha x).    
\end{equation}

The following lemma will be crucial for the induction step:

\begin{lemma}\label{lem: transfer separating interval}
Let $m\geq1$ be an integer, and suppose that the interval $\left[m-1/2,m+1\right]$ is $\kappa$-good for~$F$. Then the interval $[m+a,m+b]$ is $(1+\kappa)$-good for~$\tF$, where
\begin{equation}\label{eq: a,b}
a:=\frac{1-\kappa}{2(1+\kappa)}=\frac{1}{2}-r\quad\text{and}\quad b:=\frac{\kappa}{1+\kappa}=r.    
\end{equation}
\end{lemma}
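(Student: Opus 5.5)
The plan is to prove the two defining properties of a $(1+\kappa)$-good interval directly, using only the $\kappa$-goodness of $[m-1/2,m+1]$ for $F$ and the explicit shape of $d$ near the integer $m$. The whole argument reduces to comparing linear functions at endpoints, and the numerical choices of $a$ and $b$ in~\eqref{eq: a,b} are exactly the thresholds that make those endpoint comparisons work.

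First, the preliminary checks. From $\kappa=r/(1-r)$ we get $1+\kappa=1/(1-r)$ and $1-\kappa=(1-2r)/(1-r)$, which gives $a=\tfrac12-r$ and $b=r$. Since $r=\sqrt{\alpha}\approx 0.486$ lies in $(1/4,1/2)$, we have
\[
0<a<b<\tfrac12 ,
\]
so $[m+a,m+b]\subset[m,m+1/2]$. On $[m,m+1/2]$ we have $d(x)=x-m$, which has slope $1$, and $F$ has slope $\kappa$ there by hypothesis. Hence $\tF$ is affine with slope $1+\kappa$ on $[m+a,m+b]$, and $\tF(c)=F(c)+(c-m)$ for every $c$ in this interval.

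Fix $c\in[m+a,m+b]$ and write $u=c-m\in[a,b]$. We first show $\tF(x)\le\tF(c)$ for $0\le x\le c$, splitting into three ranges.
\begin{itemize}
\item For $x\in[m,c]$: both $F$ and $d$ are nondecreasing on this range, so the inequality is immediate.
\item For $x\in[m-1/2,m]$: here $F(x)=F(c)-\kappa(c-x)$ and $d(x)=m-x$. The required inequality becomes $m-x-\kappa(c-x)\le u$, which is linear in $x$, so it suffices to check the endpoints. At $x=m$ it reads $-\kappa u\le u$, which is trivial. At $x=m-1/2$ it reads $(1+\kappa)u\ge(1-\kappa)/2$, i.e.\ $u\ge a$.
\item For $x<m-1/2$: separation at the point $m-1/2$ gives $F(x)\le F(m-1/2)=F(c)-\kappa(u+1/2)$, and $d(x)\le 1/2$ always. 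Together these give the same bound as the endpoint $x=m-1/2$ above.
\end{itemize}

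Next we show $\tF(x)\ge\tF(c)$ for $x\ge c$, again in three ranges.
\begin{itemize}
\item For $x\in[c,m+1/2]$: both summands are nondecreasing, so the inequality is immediate.
\item For $x\in[m+1/2,m+1]$: here $F(x)=F(c)+\kappa(x-c)$ and $d(x)=m+1-x$. The required inequality is linear in $x$, so we check endpoints. At $x=m+1/2$ it reads $\kappa(1/2-u)+1/2\ge u$, which holds because $u<1/2$. At $x=m+1$ it reads $\kappa(1-u)\ge u$, i.e.\ $u\le\kappa/(1+\kappa)=b$.
\item For $x>m+1$: separation at $m+1$ gives $F(x)\ge F(m+1)$, and $d(x)\ge 0$. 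These reduce to the same endpoint inequality as $x=m+1$.
\end{itemize}

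There is no serious obstacle here. The only point needing care is recognizing that the worst cases occur at $x=m\pm\tfrac12$ and $x=m+1$, where $d$ changes monotonicity or the affine control of $F$ ends. The hypothesis on the full interval $[m-1/2,m+1]$, rather than just near $m$, is exactly what lets the tails $x<m-1/2$ and $x>m+1$ reduce to those endpoint cases.
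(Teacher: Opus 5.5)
Your proof is correct and follows essentially the same route as the paper. Both arguments use the same ingredients: the slopes of $\tF$ on $[m-1/2,m]$, $[m,m+1/2]$ and $[m+1/2,m+1]$, the bound $0\le d\le 1/2$, and the separating property of $F$ at $m-1/2$ and $m+1$, with $a$ and $b$ being exactly the thresholds where $\tF(m+a)=\tF(m-1/2)$ and $\tF(m+b)=\tF(m+1)$. The only difference is organizational: the paper establishes one-sided separation at the endpoints $m+a$ and $m+b$ and then uses that $\tF$ is increasing on $[m+a,m+b]$, whereas you verify the endpoint inequalities directly for each $c=m+u$.
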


\begin{proof}
As the function $F(x)$ is affine with the slope $\kappa$ on $[m-1/2,m+1]$, the sum $\tF(x)=F(x)+d(x)$ is affine with the slope $(\kappa+1)$ on $[m,m+1/2]$ (as $d$ is also affine on this interval), and in particular, on $[m+a,m+b]\subset[m,m+1/2]$; see Figure~\ref{fig: transfer}. The function $\tF$ is also affine with slope $\kappa-1<0$ on $[m-1/2,m]$ and on $[m+1/2,m+1]$.

Now, as $0\le d(x)\le 1/2$ for every~$x$, and as the interval $[m-1/2,m+1]$ is $\kappa$-good for $F$, we have for every $x\in [0, m-1/2]$
\[
    \tF(x)=F(x)+d(x) \le F\left(m-\frac{1}{2}\right)+\frac{1}{2} = \tF\left(m-\frac{1}{2}\right).
\]
Now, the value $a$ in~\eqref{eq: a,b} was defined so that $\tF(m+a)=\tF(m-1/2)$; again, see Figure~\ref{fig: transfer}. Indeed,
\[
\tF(m+a)-\tF(m) = a(1+\kappa) = \frac{1}{2} (1-\kappa) = \tF\left(m-\frac{1}{2}\right)-\tF(m).
\]
Hence, for every $x\in [0, m+a]$, one has $\tF(x)\le \tF(m+a)$.
\begin{figure}
    \centering
    \includegraphics[width=0.4\linewidth]{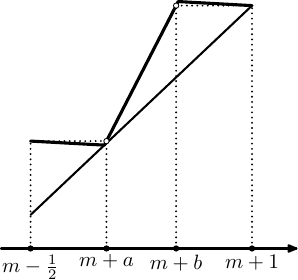}
    \caption{Graphs of $F$ (simple line) and $\tF$ (bold line) on the interval $[m-1/2,m+1]$ (see Lemma~\ref{lem: transfer separating interval}).}
    \label{fig: transfer}
\end{figure}

In the same way, for every $x\ge m+1$ one has 
\[
\tF(x) = F(x)+d(x) \ge F(x) \ge F(m+1) = \tF(m+1).
\]
The value $b<1/2$ was chosen in~\eqref{eq: a,b} so that 
\[
\tF(m+b)-\tF(m) = (1+\kappa) b = \kappa = \tF(m+1)-\tF(m),
\]
thus for every $x\ge m+b$ one has $\tF(x)\ge \tF(m+1)$.
\end{proof}

Using the rescaling relation~\eqref{eq: F,H} then immediately implies the following statement.
\begin{corollary}\label{cor: intervals}
    Let $m\geq1$ be an integer, and suppose that the interval $\left[m-1/2,m+1\right]$ is $\kappa$-good for~$F$. Then the interval $[\alpha^{-1}(m+a),\alpha^{-1}(m+b)]$ is $\kappa$-good for~$F$.
\end{corollary}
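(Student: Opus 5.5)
The corollary is a direct transfer of Lemma~\ref{lem: transfer separating interval} through the rescaling identity~\eqref{eq: F,H}, $F(x)=r^{-1}\tF(\alpha x)$. First I apply the lemma to get that $[m+a,m+b]$ is $(1+\kappa)$-good for $\tF$. Then I check that both defining properties of a good interval, separation and affinity, survive the change of variables $y=\alpha x$ and the multiplication by the positive constant $r^{-1}$.

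For separation, fix $c\in[\alpha^{-1}(m+a),\alpha^{-1}(m+b)]$, so that $\alpha c\in[m+a,m+b]$ is a separating point for $\tF$.
\begin{itemize}
\item For $x\in[0,c]$ we have $\alpha x\in[0,\alpha c]$, hence $\tF(\alpha x)\le\tF(\alpha c)$. Multiplying by $r^{-1}>0$ gives $F(x)\le F(c)$.
\item For $x\ge c$ we have $\alpha x\ge \alpha c$, hence $\tF(\alpha x)\ge \tF(\alpha c)$, and so $F(x)\ge F(c)$.
\end{itemize}
The map $x\mapsto\alpha x$ is an increasing bijection of $[0,\infty)$ onto itself, so both half-lines are fully covered. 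Thus every point of the rescaled interval is separating for $F$.

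For affinity, on $[m+a,m+b]$ we have $\tF(y)=\tF(m+a)+(1+\kappa)(y-(m+a))$. Substituting $y=\alpha x$ gives, on $[\alpha^{-1}(m+a),\alpha^{-1}(m+b)]$,
\[
F(x)=F\bigl(\alpha^{-1}(m+a)\bigr)+r^{-1}\alpha(1+\kappa)\bigl(x-\alpha^{-1}(m+a)\bigr).
\]
So $F$ is affine there with slope $r^{-1}\alpha(1+\kappa)=r(1+\kappa)$.

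The only step needing a computation is checking that this slope equals $\kappa$. Since $\kappa=r/(1-r)$, we get $1+\kappa=1/(1-r)$, and therefore $r(1+\kappa)=r/(1-r)=\kappa$. This is exactly why the weights $d_m=r^{-m}$ with $r^2=\alpha$ were chosen. No other obstacle is expected, since all the substantive work is already done in Lemma~\ref{lem: transfer separating interval}.
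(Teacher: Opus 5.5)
Your proof is correct and follows the paper's own argument. Like the paper, you apply Lemma~\ref{lem: transfer separating interval} and transport the $(1+\kappa)$-good interval for $\tF$ through $F(x)=r^{-1}\tF(\alpha x)$, where the slope picks up the factor $r^{-1}\alpha=r$ and $r(1+\kappa)=\kappa$. You also spell out why separation survives the increasing change of variables and the positive factor $r^{-1}$, which the paper leaves implicit.
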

\begin{proof}
    The slope of the rescaled function $r^{-1}\tF(\alpha x)$ on the rescaled interval $[\alpha^{-1}(m+a),\alpha^{-1}(m+b)]$ differs from the slope of $\tF$ on $[(m+a),(m+b)]$ by the factor $r^{-1}\alpha=r$, and is thus equal to $r(1+\kappa)=\kappa$. Hence, this rescaled interval is $\kappa$-good for~$F$.
\end{proof}

We will use Corollary~\ref{cor: intervals} to prove Proposition~\ref{prop: separating Mk intervals} by induction. Namely, we will prove the following two statements.

\begin{lemma}[Base of induction]\label{lem: base}
    The interval $[1/2,2]$ is $\kappa$-good for~$F$.
\end{lemma}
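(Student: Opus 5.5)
The plan is to work entirely with the function $F$ from~\eqref{eq: define F}. By Lemma~\ref{lem: initial line}, $F(t)=\kappa t$ on $[0,1/(2\alpha)]$, and $1/(2\alpha)=(\sqrt5+2)/2>2$. So the affine part of $\kappa$-goodness on $[1/2,2]$ is immediate. Also, for $c\in[1/2,2]$ and $x\in[0,c]$ we get $F(x)=\kappa x\le \kappa c=F(c)$. For $x\in[c,1/(2\alpha)]$ we likewise get $F(x)\ge F(c)$. Everything therefore reduces to one lower bound:
\[
F(x)\ge 2\kappa=F(2)\qquad\text{for all } x\ge \tfrac{1}{2\alpha}.
\]

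\emph{Step 1 (the first non-linear piece).} On $[1/(2\alpha),1/\alpha]$ all summands with $j\ge2$ are still linear, since $\alpha^j x\le\alpha\le 1/2$. For $j=1$ we have $\alpha x\in[1/2,1]$, so $d(\alpha x)=1-\alpha x$. This gives
\[
F(x)=(\kappa-r)x+r^{-1}(1-\alpha x)=(\kappa-2r)x+r^{-1},
\]
using $r^{-1}\alpha=r$. Since $r<1/2$ we have $\kappa-2r=r(2r-1)/(1-r)<0$, so $F$ is affine and decreasing there. Its minimum is at $x=1/\alpha$, where, by~\eqref{eq: F,H} and $d(1)=0$,
\[
F(1/\alpha)=r^{-1}\tF(1)=r^{-1}F(1)=\kappa/r .
\]
Hence $F\ge\kappa/r$ on $[1/(2\alpha),1/\alpha]$.

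\emph{Step 2 (propagation by self-similarity).} Relation~\eqref{eq: F,H} together with $d\ge0$ gives $F(x)\ge r^{-1}F(\alpha x)$ for all $x$. I will prove by induction on $k\ge1$ that $F\ge\kappa/r$ on $[1/(2\alpha),\alpha^{-k}]$. The case $k=1$ is Step~1.

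For the step, take $x\in[\alpha^{-k},\alpha^{-k-1}]$, so that $y=\alpha x\in[\alpha^{-k+1},\alpha^{-k}]$ and in particular $y\ge1$. If $y\le1/(2\alpha)$, then $F(y)=\kappa y\ge\kappa$. Otherwise $F(y)\ge\kappa/r\ge\kappa$ by the induction hypothesis. In both cases
\[
F(x)\ge r^{-1}F(y)\ge \kappa/r .
\]
This closes the induction, so $F\ge\kappa/r$ on $[1/(2\alpha),\infty)$.

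\emph{Conclusion.} Since $r<1/2$, we have $\kappa/r>2\kappa=F(2)\ge F(c)$ for every $c\in[1/2,2]$. Combined with the linear region this yields $F(x)\ge F(c)$ for all $x\ge c$. The only delicate point is the one checked in Step~1: $F$ dips on $[1/(2\alpha),1/\alpha]$, and we need the dip to stop above $2\kappa$. This holds exactly because $r<1/2$, that is, because $\alpha^{-1}>4$.
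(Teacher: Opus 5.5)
Your proof is correct. Its core estimate matches the paper's, but you finish the argument by a shorter route. The paper also combines $F(x)\ge r^{-1}F(\alpha x)$ with $\tF\ge\tF(1)=\kappa$ on $[1/2,1/(2\alpha)]$ to get $F(x)\ge \kappa/r=1/(1-r)$ for all $x>1/(2\alpha)$. The negative slope of $\tF$ on $[1/2,1]$ is exactly your Step~1 computation, read through $F(x)=r^{-1}\tF(\alpha x)$. Your induction over $[\alpha^{-k},\alpha^{-k-1}]$ is only a different bookkeeping of the paper's choice of $j$ with $\alpha^j x\in[1/2,1/(2\alpha)]$.

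The two routes differ in what is done with this bound. The paper only extracts from it that $[0,1]$ is $\kappa$-good, using $1/(1-r)>1$. It then reruns the right-hand half of the argument of Lemma~\ref{lem: transfer separating interval} to show that $[\alpha^{-1}a,\alpha^{-1}b]$ is $\kappa$-good. Finally it checks numerically that $a<\alpha/2$ and $2\alpha<b$, which gives $[1/2,2]\subset[\alpha^{-1}a,\alpha^{-1}b]$.

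You instead observe two things: $\kappa/r>2\kappa=F(2)$ is equivalent to $r<1/2$, and $[1/2,2]$ already lies in the linear zone $[0,1/(2\alpha)]$. So the bound closes the argument at once, with no transfer step and no numerical estimates beyond $\alpha^{-1}>4$. Nothing is lost by this shortcut. Since $1/r<1/(2\alpha)$, the same bound shows that $[0,1/r]$ is $\kappa$-good, and $1/r=\alpha^{-1}b$ is exactly the right endpoint the paper obtains. The paper's detour has only the expository merit of presenting the base case as an instance of the mechanism in Corollary~\ref{cor: intervals}, which also drives the induction step.
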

\begin{lemma}[Step of induction]\label{lem: step}
    For every $k\ge 0$, one has 
    \begin{equation}\label{eq: Mk intervals inclusion}
        \left[M_{k+1}-1/2,M_{k+1}+1\right]
        \subset 
        \left[\alpha^{-1}(M_k+a),\alpha^{-1}(M_k+b)\right],
    \end{equation}
    where $a,b$ are given by~\eqref{eq: a,b}.
\end{lemma}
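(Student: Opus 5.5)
We need $\alpha^{-1}(M_k+a)\le M_{k+1}-1/2$ and $M_{k+1}+1\le \alpha^{-1}(M_k+b)$. Equivalently, after multiplying by $\alpha>0$,
\[
M_k+a\le \alpha M_{k+1}-\tfrac{\alpha}{2},\qquad \alpha M_{k+1}+\alpha\le M_k+b .
\]
So the plan is to compute $\alpha M_{k+1}-M_k$ exactly using Lemma~\ref{lem: exact approximation}, and then check two numerical inequalities involving $a=1/2-r$, $b=r$ and $r=\sqrt\alpha$.

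\textbf{Exact expression for $\alpha M_{k+1}-M_k$.} Lemma~\ref{lem: exact approximation} gives $q_j\alpha=q_{j-1}+(-1)^j\alpha^{j+1}$ for $j\ge0$. Summing over $j=0,\dots,k+1$, and using $q_{-1}=0$, we get
\[
\alpha M_{k+1}=\sum_{j=0}^{k+1}q_{j-1}+\sum_{j=0}^{k+1}(-1)^j\alpha^{j+1}=M_k+E_k,\qquad E_k:=\sum_{j=1}^{k+2}(-1)^{j-1}\alpha^{j}.
\]
This is an alternating sum with decreasing terms $\alpha>\alpha^2>\dots$. Hence $E_k$ lies between its first two partial sums,
\[
\alpha-\alpha^2\le E_k\le \alpha,
\]
uniformly in $k$; in fact $E_k\to \alpha/(1+\alpha)$.

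\textbf{Reduction to two numerical inequalities.} The inclusion now reduces to
\[
a+\tfrac{\alpha}{2}\le E_k\quad\text{and}\quad E_k+\alpha\le b .
\]
Using the bounds on $E_k$, it suffices to verify
\[
\tfrac12-r+\tfrac{\alpha}{2}\le \alpha-\alpha^2
\quad\text{and}\quad
2\alpha\le r .
\]

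\textbf{Numerical check.} Here $\alpha=\sqrt5-2\approx0.2361$ and $r\approx0.4859$.
\begin{itemize}
\item Second inequality: $2\alpha\approx0.472<0.4859$. It is equivalent to $4\alpha\le1$, which is $\sqrt5\le 9/4$ and holds since $5\le 81/16$.
\item First inequality: the left side is $\approx 0.5-0.4859+0.1180=0.1321$, and the right side is $\approx0.2361-0.0557=0.1804$. So it holds.
\end{itemize}
To make this rigorous I would use crude rational bounds such as $0.236<\alpha<0.2361$ and $0.4858<r<0.486$. The second inequality is the tight one, with margin only about $0.014$, so the bounds on $r$ have to be chosen with enough precision there. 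This exactness check is the only delicate step; the rest is the telescoping identity.
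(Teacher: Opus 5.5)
Your proof is correct and follows the paper's argument essentially verbatim. You rescale by $\alpha$, telescope $\alpha M_{k+1}-M_k$ via Lemma~\ref{lem: exact approximation} into the alternating sum $\alpha-\alpha^2+\cdots+(-1)^{k+1}\alpha^{k+2}\in[\alpha-\alpha^2,\alpha]$, and then check $a+\alpha/2<\alpha-\alpha^2$ and $\alpha<b-\alpha$, just as the paper does.

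One small remark: your reduction of the second check to $4\alpha\le 1$ is already an exact verification. So your closing concern about needing precise bounds on $r$ for that inequality is unnecessary; only the first inequality uses the rational bounds you list.
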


\begin{proof}[Proof of Lemma~\ref{lem: base}]
    By Lemma~\ref{lem: initial line}, the function $F$ coincides with $\kappa x$ on $[0,1/(2\alpha)]$. 
    Now,~\eqref{eq: F,H} implies that for every $x>0$,
    \begin{equation}\label{eq: F-r}
        F(x)= r^{-1} \tF(\alpha x) \ge r^{-1} F(\alpha x).
    \end{equation}
    For any $x> 1/(2\alpha)$ there exists a power $j>0$ such that $\alpha^j x \in \left[1/2, 1/(2\alpha)\right]$. Now, 
    \[
        \tF(x)\ge \tF(1) =\kappa \quad \text{ for every } \quad x \in \left[\frac{1}{2}, \frac{1}{2\alpha}\right]
    \]
    (this inequality is immediate on $[1,1/(2\alpha)]$, and follows from the negative slope of $\tF$ on $[1/2,1]$). Hence, from~\eqref{eq: F-r} we get 
    \[
        F(x) \ge r^{-j} \tF(\alpha^j x) \ge r^{-1} \cdot \min_{y \in \left[\frac{1}{2}, \frac{1}{2\alpha}\right]} \tF(y) = r^{-1} \cdot \kappa=\frac{1}{1-r}>1.
    \]
    In particular, the interval $[0,1]$ is $\kappa$-good for~$F$. 

    At this moment, we cannot formally refer to Lemma~\ref{lem: transfer separating interval}, starting from the interval $[0,1]$: the assumptions at the Lemma  at  left part of the interval are not satisfied, $[-1/2,1]$ is not a $\kappa$-good interval. However, we can repeat the arguments from its proof at the right part. Namely, the interval $[a,b]$ is $(1+\kappa)$-good for $\tF$, and thus the interval $[\alpha^{-1}a, \alpha^{-1}b]$ is $\kappa$-good for~$F$.
   
    Finally, it is straightforward to check that
    \begin{equation}\label{eq: inclusion}
        a<\frac{1}{2} \alpha <2\alpha< b.
    \end{equation}
    Indeed, note that $\alpha^{-1}<4+1/4$, thus implying $r^{-1}<2+1/16$ and $r>1/2-(1/16)(1/2^2)$. Hence, 
    \[
        a = \frac{1}{2}-r< \frac{1/16}{2^2}=\frac{1}{64}<\frac{\alpha}{2}.
    \]
    In the same way, 
    \[
        b=r>\frac{1}{2}-\frac{1}{64} > 2\cdot \frac{17}{72}= 2\cdot \frac{q_2}{q_3} >2\alpha.
    \]
    This proves~\eqref{eq: inclusion}. It implies the inclusion of the intervals 
    \[
    \left[\frac{1}{2},2\right]\subset 
    [\alpha^{-1} a, \alpha^{-1} b];
    \]
    hence, the interval $[1/2,2]$ is $\kappa$-good for~$F$.
\end{proof}

\begin{proof}[Proof of Lemma~\ref{lem: step}]
    Rescaling by $\alpha$, we see that the desired~\eqref{eq: Mk intervals inclusion} is equivalent to the inequalities
    \[
        M_k+a < \alpha\left(M_{k+1}-\frac{1}{2}\right), \qquad 
        \alpha(M_{k+1}+1) < M_k+b,
    \]
    or, equivalently,
    \begin{equation}\label{eq: alpha - M}
        a+\frac{\alpha}{2} < \alpha M_{k+1}-M_k < b-\alpha.
    \end{equation}
    Now, rewrite 
    \[
        \alpha M_{k+1} - M_k= \alpha \sum_{j=0}^{k+1} q_j - \sum_{j=0}^{k} q_j = \sum_{j=0}^{k+1} (\alpha q_{j}-q_{j-1}),
    \]
    where we have used $q_{-1}=0$ in the last equality.
    Now, recall that from Lemma~\ref{lem: exact approximation} we have $\alpha q_{j}-q_{j-1}=(-1)^{j}(\alpha)^{j+1}$. Thus,
    \begin{equation}\label{eq: sum - alpha}
        \sum_{j=0}^k (\alpha q_{j+1}-q_j) = \sum_{j=0}^k (-1)^j \alpha^{j+1} = \alpha-\alpha^2+\alpha^3-\dots +(-1)^{k+1} \alpha^{k+2}.        
    \end{equation}
    The sum in the right hand side of~\eqref{eq: sum - alpha} lies between $\alpha$ and $\alpha-\alpha^2$; straightforward check shows that 
    \[
        a+\frac{\alpha}{2} < \alpha-\alpha^2 <\alpha < b-\alpha.
    \]
    This implies~\eqref{eq: alpha - M} and thus the desired~\eqref{eq: Mk intervals inclusion}.
\end{proof}

\begin{proof}[Proof of Proposition~\ref{prop: separating Mk intervals}]
    The proof is by induction on~$k$. The base $k=0$ (for  $M_0=1$) is guaranteed by Lemma~\ref{lem: base}. Now, if for some $k\ge 0$ the interval $[M_k-1/2,M_k+1]$ is $\kappa$-good for $F$, then by Corollary~\ref{cor: intervals} the interval $[\alpha^{-1}(M_k+a),\alpha^{-1}(M_k+b)]$ is $\kappa$-good for~$F$. By Lemma~\ref{lem: step} this interval includes $[M_{k+1}-1/2, M_{k+1}+1]$; the latter is hence also $\kappa$-good for~$F$, thus completing the induction step.
\end{proof}

\section{Proof of Theorem~\ref{thm: counterexample McDuff}}

\begin{proof}
For the choice of lengths $\ell_n=e^{-L_n}=e^{-F(n)}$ as above, the new circle is well-defined, as the sum of their lengths converges due to Lemma~\ref{l: convergence and summability}. Due to Proposition~\ref{prop: continuous logarithmic slope - new}, the constructed map $f$ satisfies $\log Df=\varphi\circ p$, where $p$ is the projection on the initial circle (semiconjugating $f$ and $R_{\alpha}$), and hence $f$ is a $C^1$-diffeomorphism. Finally, the gaps in the sequence of log-lengths are present due to Proposition~\ref{prop: arithmetic gap}.
\end{proof}

\bigskip

\noindent{\bf Acknowledgments.} The author is very grateful to Leonardo Dinamarca and Sang-hyun Kim for many fruitful discussions. The author also wishes to thank Andrés Navas and Victor Kleptsyn for sending him a preliminary version of their closely related work, on which the present article builds. The author is especially thankful to Victor Kleptsyn for kindly explaining how to construct exceptional diffeomorphisms of the circle whose derivatives are not identically equal to one on their invariant Cantor sets, as well as for the many discussions that led to this work.

\end{document}